\newcommand{\beq}{\begin{equation}}  
\newcommand{\eeq}{\end{equation}}  
\newcommand{\bea}{\begin{eqnarray}}  
\newcommand{\eea}{\end{eqnarray}}
\newcommand{\realpart}[1]{\operatorname{\sf Re}\!\left(#1\right)}
\newcommand{\eq}[2]{\begin{equation}\begin{split}#1\end{split}\label{#2}\end{equation}}
\newcommand{\inner}[2]{\left<#1,\,#2\right>}
\newcommand{\eqnn}[1]{\begin{equation}\begin{split}#1\end{split}\nonumber\end{equation}}
\newtheorem{theorem}{Theorem}
\numberwithin{theorem}{section}
\newtheorem{corollary}[theorem]{Corollary}
\newtheorem{lemma}[theorem]{Lemma}
\theoremstyle{definition}
\newtheorem{definition}[theorem]{Definition}
\newtheorem{remark}[theorem]{Remark}
\begin{document}
	
\title[Spectral instability of peakons]{Spectral instability of peakons in the $b$-family \\ of the Camassa--Holm equations}

\author[S. Lafortune]{St\'ephane Lafortune}
\address[S. Lafortune]{Department of Mathematics, College of Charleston, Charleston, SC 29401, USA}
\email{lafortunes@cofc.edu}

\author[D.E. Pelinovsky]{Dmitry E. Pelinovsky}
\address[D.E. Pelinovsky]{Department Of Mathematics \& Statistics,        %
McMaster University, Hamilton, Ontario, L8S 4K1, Canada}
\email{dmpeli@math.mcmaster.ca}

\date{}
\maketitle

\begin{abstract} 
	We prove spectral instability of peakons in the $b$-family of Camassa--Holm equations in $L^2(\mathbb{R})$ that includes the integrable cases of $b = 2$ and $b = 3$. We start with a linearized operator defined on functions in 
	$H^1(\mathbb{R}) \cap W^{1,\infty}(\mathbb{R})$ and extend it to a linearized operator defined on weaker functions in $L^2(\mathbb{R})$. For $b \neq \frac{5}{2}$, the spectrum of the linearized operator in $L^2(\mathbb{R})$ is proved to cover a closed vertical strip of the complex plane. For $b = \frac{5}{2}$, the strip shrinks to the imaginary axis, but an additional pair of real eigenvalues exists due to projections to the peakon and its spatial translation. The spectral instability results agree with the linear instability results in the case of the Camassa-Holm equation for $b = 2$. 
\end{abstract} 

\section{Introduction} 
\label{intro}

We consider the following $b$-family of Camassa--Holm equations (which we call $b$-CH),
\beq \label{bfamily} 
u_t - u_{xxt} +(b+1)uu_x=bu_x u_{xx} + u u_{xxx}. 
\eeq 
The family generalizes the classical cases of the Camassa--Holm (CH) equation for $b = 2$ and the Degasperis--Procesi (DP) equation for $b = 3$. 

The CH equation has first appeared in the study of bi-Hamiltonian structure 
of the Korteweg--de Vries (KdV) equation \cite{Fokas}. It was later introduced by Camassa and Holm \cite{Cam} in hydrodynamical
applications as a model for unidirectional wave propagation
on shallow water. The hydrodynamical relevance of the CH 
equation as a model for shallow water waves was discussed in 
\cite{Cam2,Johnson,Const4}. 

The DP equation can also be regarded as a model for nonlinear shallow water dynamics with its asymptotic accuracy equal to the CH equation \cite{dp}.  The $b$-CH family of equations was introduced in \cite{Dullin,dhh} by using transformations of the integrable KdV equation within the same asymptotic accuracy.

One of the most intriguing properties of the $b$-CH equations is
the occurrence of wave breaking when the solutions stay bounded 
but their slope develops a singularity in a finite time. 
Related to the wave breaking is the existence of peaked travelling waves 
called {\em peakons}. The exact peakon solution is given by 
\beq \label{peakon}
 u(x,t) = ce^{-|x-ct|} \qquad x \in \mathbb{R}.
\eeq
This solution is related to the reformulation 
of the $b$-CH equation \eqref{bfamily} in the weaker form
\begin{equation} 
\label{bfamilyi} \vspace{-.2cm}
u_t + u u_x + \frac{1}{4} \phi' \ast \left[ b u^2 + (3-b) u_x^2 \right] = 0,
\end{equation}
where $\ast$ denotes convolution and $\phi(x) = e^{-|x|}$ is Green's 
function satisfying $(1-\partial_x^2) \phi = 2 \delta_0$ 
with $\delta_0$ being the Dirac delta distribution centered at $x = 0$.
 
According to (\ref{peakon}), the exact peakon solution in the form 
$u(x,t) = c \phi(x-ct)$ satisfies the integral equation (\ref{bfamilyi}). 
We can assume without loss of generality that $c=1$ 
because the scaling transformation $u(x,t)\rightarrow a u(x,at)$ with arbitrary $a \in \mathbb{R}$ leaves \eqref{bfamilyi} invariant. It can then be checked directly 
that $\phi(x) = e^{-|x|}$ satisfies the integral equation
\begin{equation} \label{statCH}
-\phi +\frac{1}{2} \phi^2 +\frac{1}{4} \phi \ast \left[ b \phi^2 +(3-b) (\phi')^2 \right] = 0   
\end{equation}
piecewisely on both sides from the peak at $x = 0$, where the integral equation (\ref{statCH}) arises after substituting the travelling wave reduction 
to the $b$-CH equation (\ref{bfamilyi}) and integrating in $x$ with zero conditions at infinity.

Stability of peakons has been considered in the literature. Numerical simulations in \cite{Holm1,Holm2} showed that the peakons of the $b$-CH equation are likely to be unstable for $b < 1$. This conjecture was 
recently illustrated in  \cite{Char1} with the analysis of the linearized operator at the peakon solution and additional numerical experiments. 
In the case of $b<-1$, the numerical results of  \cite{Holm1,Holm2} suggested that arbitrary initial data moves to the left and asymptotically separates out into a number of smooth time-independent solitary waves. Smooth time-independent solitary wave solutions were shown to be orbitally stable for $b < -1$ in \cite{Hone14}. Smooth travelling solitary wave solutions in a regularized version of the b-CH equation were shown to be orbitally stable for $b = 2$ in \cite{CS-02} and for $b = 3$ in \cite{Liu-21}.

For $b > 1$, numerical simulations in \cite{Holm1,Holm2} showed that 
arbitrary initial data asymptotically resolves into a number of peakons. 
Orbital stability of peakons in the energy space $H^1(\mathbb{R})$ was shown 
for the CH equation $(b = 2)$ in \cite{Const5,Cons1} by using 
conservation of two energy integrals. This method was extended 
in \cite{LinLiu}, where the authors showed orbital stability of 
peakons for the DP equation ($b = 3$) in the energy space $L^2(\mathbb{R}) \cap L^3(\mathbb{R})$.
Since solutions of the initial-value problem 
for the $b$-CH equation with $b > 1$ are ill-posed in $H^s(\mathbb{R})$ for $s < \frac{3}{2}$ \cite{Him} (and in $H^{\frac{3}{2}}(\mathbb{R})$ for $1 < b \leq 3$ \cite{Molinet}) due to the lack of continuous dependence and norm inflation, smooth solutions to the CH and DP equations 
were considered in \cite{Cons1} and \cite{LinLiu} being close to the 
peakons in the energy space.

The largest class of initial data for which the initial-value problem 
is well-posed for the $b$-CH equation is given by the space $H^1(\mathbb{R}) \cap W^{1,\infty}(\mathbb{R})$ \cite{Linares}. 
It was recently shown in \cite{Natali} for the CH equation $(b=2)$ 
that although the peakons are orbitally stable in $H^1(\mathbb{R})$, they are unstable with respect to perturbations in $W^{1,\infty}(\mathbb{R})$ 
in the sense that the $W^{1,\infty}$ norm of the peaked perturbations may grow in time and may reach infinity in a finite time leading to the wave breaking of the solution. This analysis was performed 
by using the method of characteristics in the nonlinear evolution 
of the CH equation. 

Previous studies of stability avoid the question of the linearized 
stability of peakons because it was believed that ``the nonlinearity plays a dominant role rather than being a higher-order correction" and that ``the passage from the linear to the nonlinear theory is not an easy task, and may even be false" \cite{Cons1}. The first study of the linearized evolution 
of peaked perturbations in \cite{Natali} gave a valid evidence 
to this concern since it was found that the $H^1$ norm of the peaked perturbations grow within the linearized approximation exponentially as $e^{\frac{1}{2} t}$, while it does not grow in the full nonlinear evolution. 
It was recently clarified in \cite{MP-2021} in the setting of peaked periodic waves of the CH equation $(b=2)$ that both the growth of the $H^1$ norm of perturbations in the linearized approximation and its boundedness 
in the full nonlinear evolution of the CH equation are related to the same two conserved energies. 

These preliminary results raised an open question of whether the instability 
of the peakons can be understood from the spectral stability 
theory, where the instability of travelling waves follow
from the presence of the spectrum 
of a linearized operator in the right half plane of the complex plane. 
{\em The main purpose of this work is to give a definitive answer to this question with rigorous analysis of the spectral instability of peakons 
in the $b$-CH equation for any $b$.}

Since the local well-posedness of the initial-value problem 
holds in the space 
$H^1(\mathbb{R}) \cap W^{1,\infty}(\mathbb{R})$ 
which include peakons and their peaked perturbations \cite{Linares,Natali}, 
we first introduce the linearized operator acting on functions in this space. 
However, this space is restrictive for the spectral stability theory, 
hence we use projections to the peakon and its spatial translation 
in order to extend the linearized operator in $L^2(\mathbb{R})$ with a suitable 
defined domain, similarly to the recent studies of peaked periodic waves 
in the reduced Ostrovsky equation \cite{GP1,GP2}. 

We then analyze the spectrum of the linearized operator in $L^2(\mathbb{R})$. 
We prove that for $b \neq \frac{5}{2}$, the spectrum covers a closed vertical strip of the complex plane. The half-width of the strip is exactly $\frac{1}{2}$ 
for $b = 2$ which coincides with the exponential growth $e^{\frac{1}{2} t}$ of the $H^1$ norm of the perturbations  obtained in \cite{Natali}. We also note that the half-width of the strip for $b < \frac{5}{2}$ agrees with the observation made in 
Remark 3.6 in \cite{Char1} in their analysis of the differential operator 
at the peakon of the $b$-CH equation.  

For $b = \frac{5}{2}$, the strip shrinks to the imaginary axis, but we show that the projections to the peakon and its spatial translation also grow exponentially according to a system of two first-order differential equations. This additional instability suggests that the peakons of the $b$-CH equation (\ref{bfamilyi}) are spectrally unstable in $L^2(\mathbb{R})$ for every $b$.

The paper is organized as follows. Section \ref{main_res} explains the 
derivation of the linearized operator acting on functions in $H^1(\mathbb{R}) \cap W^{1,\infty}(\mathbb{R})$ and its extension to $L^2(\mathbb{R})$ with a suitable defined domain, 
after which the main result is formulated. Section \ref{sec-spectrum} gives the proof of the main result with the analysis of spectral properties 
of the linearized operator in $L^2(\mathbb{R})$. 
Section \ref{sec-time} describes the time evolution of the linearized 
equation in connection 
to the spectral properties of the linearized operator in $L^2(\mathbb{R})$.

\section{Linearized evolution} 
\label{main_res}

We recall from \cite{Natali} that if $u \in C([0,T),H^1(\mathbb{R}) \cap W^{1,\infty}(\mathbb{R}))$ is a weak solution to the $b$-CH equation (\ref{bfamilyi}) such that $u(t,\cdot + \xi(t)) \in C^1(-\infty,0) \cap C^1(0,\infty)$ for $t \in [0,T)$, then the single peak at $x = \xi(t)$ moves 
along the local characteristic curve with $\xi'(t) = u(t,\xi(t))$. 
Therefore, we decompose the solution near the peakon $\phi(x) = e^{-|x|}$ 
travelling with the unit speed into the following sum:
\begin{equation}
\label{decomp}
u(t,x) = \phi(x-t-a(t)) + v(t,x-t-a(t)), 
\end{equation}
where $v(t,\cdot) \in H^1(\mathbb{R}) \cap W^{1,\infty}(\mathbb{R})$ 
is the peaked perturbation such that $v(t,\cdot) \in C^1(-\infty,0) \cap C^1(0,\infty)$ 
and $a(t)$ is the deviation of the peak position of the perturbed peakon
from its unperturbed position satisfying $a'(t) = v(t,0)$. 
Substituting (\ref{decomp}) into (\ref{bfamilyi}) and using 
the stationary equation (\ref{statCH}) yields the following linearized equation:
\beq 
\label{eigp}
v_t = (1-\phi) v_{\xi} + (v_0 - v) \phi' 
-\frac{1}{2} \phi' \ast \left[ b \phi v + (3-b) \phi' v_{\xi} \right],
\eeq
where $v_0(t) := v(t,0)$ and $\xi := x - t -a(t)$ is the new spatial coordinate 
in the travelling frame. By using an elementary identity proven in \cite{Natali}, 
$$
\phi' \ast (\phi' v') = \phi \ast (\phi' v) - \phi' \ast (\phi v) + 2(v_0-v) \phi', \quad \forall v \in H^1(\mathbb{R}), 
$$
we can rewrite the linearized equation (\ref{eigp}) in the equivalent form:
\beq 
\label{eigp2}
v_t = (1-\phi)v_{\xi} + (b-2)(v_0-v)\phi' + Q(v),
\eeq
where
\beq
\label{Q-form1}
Q(v) := \frac{1}{2} (b-3) \phi \ast (\phi' v) - \frac{1}{2} (2b-3) \phi' \ast (\phi v).
\eeq
Using another elementary identity from \cite{Natali}, 
$$
\phi \ast (\phi' v) + \phi' \ast (\phi v) + 2 \phi v_{-1} = 0, \quad \forall v \in H^1(\mathbb{R}), \quad v_{-1}(\xi) := \int_0^{\xi} v(\xi') d\xi, 
$$
one can rewrite $Q(v)$ into the following two equivalent forms:
\beq
\label{Q-form2}
Q(v) = \frac{3}{2} (b-2) \phi \ast (\phi' v) + (2b-3) \phi v_{-1} 
= -\frac{3}{2} (b-2) \phi' \ast (\phi v) + (3-b) \phi v_{-1}.
\eeq
The following lemma shows that the linear operator $Q$ is compact in $L^2(\mathbb{R})$. 

\begin{lemma}
	\label{lem-compact}
	The operator $Q : L^2(\mathbb{R}) \mapsto L^2(\mathbb{R})$ is compact. 
\end{lemma}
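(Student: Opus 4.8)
The plan is to recognize $Q$ as a Hilbert--Schmidt integral operator on $L^2(\mathbb{R})$, which is automatically compact. I would work from the representation \eqref{Q-form1}. For $v \in L^2(\mathbb{R})$ one has $\phi v, \phi' v \in L^2(\mathbb{R})$ because $\phi, \phi' \in L^\infty(\mathbb{R})$, and convolution with either of the $L^1(\mathbb{R})$ functions $\phi$ and $\phi'$ maps $L^2(\mathbb{R})$ boundedly into itself by Young's inequality; hence both terms in \eqref{Q-form1} are well defined in $L^2(\mathbb{R})$. Writing out the convolutions with $\phi(x) = e^{-|x|}$ and $\phi'(x) = -\mathrm{sgn}(x)\, e^{-|x|}$, one obtains for a.e. $\xi \in \mathbb{R}$
\[
[Q v](\xi) = \int_{\mathbb{R}} K(\xi,y)\, v(y)\, dy,
\]
\[
K(\xi,y) := \left[ \tfrac{1}{2}(2b-3)\,\mathrm{sgn}(\xi-y) - \tfrac{1}{2}(b-3)\,\mathrm{sgn}(y) \right] e^{-|\xi-y|-|y|},
\]
where the sign functions play no role on the null sets $\{\xi = y\}$ and $\{y=0\}$.

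Next I would estimate the Hilbert--Schmidt norm of $Q$. With $C_b := \tfrac{1}{2}\bigl(|2b-3|+|b-3|\bigr)$ we have $|K(\xi,y)| \le C_b\, e^{-|\xi-y|-|y|}$, so by Tonelli's theorem
\[
\|K\|_{L^2(\mathbb{R}^2)}^2 \le C_b^2 \int_{\mathbb{R}} e^{-2|y|} \left( \int_{\mathbb{R}} e^{-2|\xi-y|}\, d\xi \right) dy = C_b^2 \int_{\mathbb{R}} e^{-2|y|}\, dy = C_b^2 < \infty.
\]
Thus $K \in L^2(\mathbb{R}\times\mathbb{R})$ and $Q$ is a Hilbert--Schmidt operator on $L^2(\mathbb{R})$; in particular it is bounded and compact, which is the assertion of the lemma.

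There is no genuinely hard step here; the only points deserving a word of care are (i) the fact that the two convolutions in \eqref{Q-form1} are meaningful for every $v \in L^2(\mathbb{R})$ and not merely for $v \in L^1(\mathbb{R}) \cap L^2(\mathbb{R})$, which is exactly the content of Young's inequality since $\phi, \phi' \in L^1(\mathbb{R})$, and (ii) identifying the convolution operators with their kernels before splitting the double integral into an iterated one via Tonelli's theorem. One could just as well start from either expression in \eqref{Q-form2}: there the operator $v \mapsto \phi\, v_{-1}$ has kernel equal to $e^{-|\xi|}$ (times the indicator of the segment between $0$ and $\xi$, with the appropriate sign), whose square integrates to $\int_{\mathbb{R}} |\xi|\, e^{-2|\xi|}\, d\xi = \tfrac{1}{2}$, so that representation is Hilbert--Schmidt as well.
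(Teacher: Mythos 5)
Your proof is correct and takes essentially the same route as the paper: both recognize $Q$ as a Hilbert--Schmidt integral operator with kernel bounded by a constant times $e^{-|\xi-y|-|y|}$ and verify that this kernel lies in $L^2(\mathbb{R}^2)$, hence $Q$ is compact. The only cosmetic difference is that you assemble a single kernel from \eqref{Q-form1} with an explicit constant $C_b$, while the paper treats the two building blocks of \eqref{Q-form2} term by term with exact kernel norms.
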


\begin{proof}
Each term in either \eqref{Q-form1} or \eqref{Q-form2} can 
be written as an integral operator of the form
$$
\int_{-\infty}^\infty K(\xi,\xi') v(\xi')\, d\xi',
$$
for some kernel $K\in  L^2(\mathbb{R}^2)$. As such, each of those terms defines a Hilbert-Schmidt integral operator, known to be compact (see~\cite{Renardy}, p.~262). To be more specific, $\phi \ast (\phi' v)$ corresponds to the kernel
\beq\notag
%\label{K1}
	K_1=
	-{\mbox{sgn}}\!\left(\xi'\right)e^{-|\xi-\xi'|-|\xi'|},
\eeq
while $\phi v_{-1}$ corresponds to
\beq\notag
%\label{K2}
	K_2=
	\begin{cases}
		\displaystyle{{\mbox{sgn}}\!\left(\xi\right) e^{-|\xi|}\;\; \rm{for}\;\;0\leq|\xi'|\leq|\xi|},\\
		\displaystyle{0\;\;{\rm{otherwise}},}
	\end{cases}
\eeq
for which we obtain 
$$
\int_{-\infty}^{\infty} \int_{-\infty}^{\infty} |K_1(\xi,\xi')|^2 d \xi d \xi' 
= \int_{-\infty}^{\infty} \left( |\xi| + \frac{1}{2} \right) e^{-2|\xi|} d \xi = 1
$$
and
$$
\int_{-\infty}^{\infty} \int_{-\infty}^{\infty} |K_2(\xi,\xi')|^2 d \xi d \xi' 
= \int_{-\infty}^{\infty} |\xi| e^{-2|\xi|} d \xi = \frac{1}{2}.
$$
Hence, $K \in L^2(\mathbb{R}^2)$ and $Q$ is the compact Hilbert--Schmidt operator in $L^2(\mathbb{R})$.
\end{proof}

The linearized equation (\ref{eigp2}) with $Q(v)$ given by either (\ref{Q-form1}) or (\ref{Q-form2}) is well-defined 
in $v(t,\cdot) \in H^1(\mathbb{R}) \cap W^{1,\infty}(\mathbb{R})$. 
This can be shown by using either the local well-posedness theory \cite{Linares} or the method of characteristical curves \cite{Natali}. 
The linearized evolution depends on the value 
$v_0(t) = v(t,0)$ which is well-defined due to Sobolev embedding of 
$H^1(\mathbb{R})$ into the space of bounded and continuous functions. 

Next, we extend the linearized equation to the larger space ${\rm Dom}(L) \subset L^2(\mathbb{R})$ associated with the linearized operator 
\beq 
\label{Lop}
L := (1-\phi) \partial_{\xi} + (2-b) \phi' + Q.
\eeq
Since $Q$ is a compact operator in $L^2(\mathbb{R})$ by Lemma \ref{lem-compact} 
and $\phi \in W^{1,\infty}(\mathbb{R})$, the domain of $L$ 
in $L^2(\mathbb{R})$ is defined by 
\beq 
\label{Lop-domain}
{\mbox{Dom}}(L) = \left\{ v\in L^2(\mathbb{R}) : \quad (1-\phi) v' \in L^2(\mathbb{R}) \right\}.
\eeq
It follows from the bound $\| (1-\phi) v' \|_{L^2} \leq \| v' \|_{L^2}$ that $H^1(\mathbb{R})$ is continuously embedded into ${\rm Dom}(L)$. 
However, $H^1(\mathbb{R})$ is not equivalent to ${\rm Dom}(L)$ because 
$\phi' \in {\rm Dom}(L)$ but $\phi' \notin H^1(\mathbb{R})$ since $\xi \delta_0(\xi) \in L^2(\mathbb{R})$ but $\phi \notin C^1(\mathbb{R})$.  Generally, 
functions in ${\rm Dom}(L)$ do not have to be continuous across the peak at $x = 0$, therefore, $v_0$ may not be defined if $v \in {\rm Dom}(L)$ but $v \notin H^1(\mathbb{R})$. 

In order to extend the linearized equation (\ref{eigp2}) in ${\rm Dom}(L)$, we are going to use another equivalent reformulation of the linearized 
equation (\ref{eigp2}) for $v(t,\cdot) \in H^1(\mathbb{R}) \cap W^{1,\infty}(\mathbb{R})$. This is described in Lemma \ref{lem-reformulation} after the proof of the following elementary property of $L$.

\begin{lemma}
	\label{lem-projections}
For $L : {\rm Dom}(L) \subset L^2(\mathbb{R}) \mapsto L^2(\mathbb{R})$, it is true for every $b$ that 
\beq
\label{ppp}
		L\phi=(2-b)\phi'{\mbox{ and }}L\phi'=0.
\eeq	
\end{lemma}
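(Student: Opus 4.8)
The plan is to verify \eqref{ppp} by a direct computation that exploits the explicit form $\phi(\xi)=e^{-|\xi|}$ together with the single convolution identity hidden in \eqref{statCH}. The elementary facts I would start from are that $\phi'(\xi)=-\mathrm{sgn}(\xi)\,\phi(\xi)$ away from the peak, so that $(\phi')^2=\phi^2$ and $\phi\phi'=\tfrac12(\phi^2)'$, and that $1-\phi$ vanishes at $\xi=0$. The latter is exactly what puts $\phi'$ into $\mathrm{Dom}(L)$: the distributional derivative is $\phi''=\phi-2\delta_0$, but multiplication by $1-\phi$ annihilates the atom, so $(1-\phi)\,\partial_\xi\phi'=(1-\phi)\phi=\phi-\phi^2\in L^2(\mathbb{R})$; likewise $(1-\phi)\,\partial_\xi\phi=(1-\phi)\phi'$. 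Since moreover $(2-b)\phi'\cdot\phi'=(2-b)\phi^2$ and $(2-b)\phi'\cdot\phi=(2-b)\phi\phi'$, the definition \eqref{Lop} of $L$ gives
$$
L\phi'=(\phi-\phi^2)+(2-b)\phi^2+Q(\phi'),\qquad L\phi=(1-\phi)\phi'+(2-b)\phi\phi'+Q(\phi),
$$
so everything reduces to evaluating $Q(\phi)$ and $Q(\phi')$.

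For this I would first extract, from \eqref{statCH} and $(\phi')^2=\phi^2$, the identity
$$
-\phi+\tfrac12\phi^2+\tfrac34\,\phi\ast\phi^2=0\qquad\Longrightarrow\qquad \phi\ast\phi^2=\tfrac43\phi-\tfrac23\phi^2,
$$
which can also be confirmed by evaluating $\int_{\mathbb{R}}e^{-|\xi-\xi'|}e^{-2|\xi'|}\,d\xi'$ directly. Differentiating then gives $(\phi\ast\phi^2)'=\phi'\ast\phi^2=\tfrac43\phi'-\tfrac23(\phi^2)'=\tfrac43(1-\phi)\phi'$ and $\phi\ast(\phi\phi')=\tfrac12(\phi\ast\phi^2)'$, while applying $(1-\partial_\xi^2)\phi=2\delta_0$ gives $\phi'\ast(\phi^2)'=(\phi\ast\phi^2)''=\phi\ast\phi^2-2\phi^2=\tfrac43\phi-\tfrac83\phi^2$. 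These are all the convolutions that occur.

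Substituting into the form \eqref{Q-form1} of $Q$ finishes the proof. For $v=\phi'$, where $\phi'v=\phi^2$ and $\phi v=\tfrac12(\phi^2)'$, one obtains
$$
Q(\phi')=\tfrac12(b-3)\,\phi\ast\phi^2-\tfrac14(2b-3)\,\phi'\ast(\phi^2)'=-\phi+(b-1)\phi^2,
$$
hence $L\phi'=(\phi-\phi^2)+(2-b)\phi^2+(-\phi+(b-1)\phi^2)=[-1+(2-b)+(b-1)]\phi^2=0$. For $v=\phi$, which lies in $H^1(\mathbb{R})$ so that \eqref{Q-form1} applies verbatim, the identities $\phi'v=\tfrac12(\phi^2)'$ and $\phi v=\phi^2$ give $Q(\phi)=[\tfrac14(b-3)-\tfrac12(2b-3)](\phi\ast\phi^2)'=(1-b)(1-\phi)\phi'$, and therefore
$$
L\phi=(1-\phi)\phi'+(2-b)\phi\phi'+(1-b)(1-\phi)\phi'=(2-b)[(1-\phi)\phi'+\phi\phi']=(2-b)\phi'.
$$

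The computation itself is routine algebra; the one point that needs care is the bookkeeping at the peak, namely checking that $\phi'\in\mathrm{Dom}(L)$ and that each Dirac mass produced when a derivative falls on $|\xi|$ is either killed by the factor $1-\phi$ (as in $(1-\phi)\partial_\xi\phi'$) or cancels on its own (as in $(\phi\ast\phi^2)''$, which is why rewriting it via $(1-\partial_\xi^2)\phi=2\delta_0$ is convenient). As a sanity check, \eqref{ppp} is precisely what the symmetries of \eqref{bfamilyi} predict: translation invariance forces $L\phi'=0$, and the scaling symmetry $u(x,t)\mapsto a\,u(x,at)$ forces $L\phi=(2-b)\phi'$, the coefficient $2-b$ coming from the $v_0$-term in the linearized equation \eqref{eigp2}.
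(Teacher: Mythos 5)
Your proof is correct and is essentially the same direct verification as the paper's: compute $Q(\phi)$ and $Q(\phi')$ explicitly (you get the same values, $Q(\phi)=(1-b)(1-\phi)\phi'$ and $Q(\phi')=-\phi+(b-1)\phi^2$), substitute into \eqref{Lop}, and note that the Dirac mass at the peak is killed because $1-\phi$ vanishes there. The only cosmetic difference is that you work from \eqref{Q-form1} and extract the convolution $\phi\ast\phi^2$ from \eqref{statCH}, whereas the paper evaluates the convolutions in the representation \eqref{Q-form2} directly.
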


\begin{proof}
By using the first expression in (\ref{Q-form2}), we obtain for every $\xi \neq 0$
$$
Q(\phi) = \frac{3}{2} (b-2) \phi \ast (\phi \phi') + (2b-3) \phi \int_0^{\xi} \phi(\xi') d\xi' = (1-b)( \phi' - \phi \phi')
$$
and
$$
Q(\phi') = \frac{3}{2} (b-2) \phi \ast (\phi' \phi') + (2b-3) \phi \int_0^{\xi} \phi'(\xi') d\xi' = -\phi + (b-1) \phi^2,
$$
which yields (\ref{ppp}) after substituting into (\ref{Lop}). Note that 
$\phi, \phi' \in {\rm Dom}(L)$ and that the singularity at $\xi = 0$ does not result in any contribution in $L^2(\mathbb{R})$ because $\xi \delta_0(\xi) = 0$ in $L^2(\mathbb{R})$.
\end{proof}

\begin{lemma}
	\label{lem-reformulation}
Consider the class of functions in 
$$
X := C(\mathbb{R},H^1(\mathbb{R}) \cap W^{1,\infty}(\mathbb{R})) \cap C^1(\mathbb{R},L^2(\mathbb{R}) \cap L^{\infty}(\mathbb{R})). 
$$
Then, $v \in X$ is a solution 
to the linearized equation (\ref{eigp2}) if and only if $\tilde{v} := 
v - v_0 \phi \in X$ satisfying $\tilde{v}(t,0) = 0$ is a solution of the linearized equation 
\beq 
\label{eigp3}
\tilde{v}_t = L \tilde{v} - \frac{3}{2} (b-2) \langle \phi \phi', \tilde{v} \rangle \phi,
\eeq
where the inner product $\langle \cdot,\cdot \rangle$ is defined in $L^2(\mathbb{R})$.
\end{lemma}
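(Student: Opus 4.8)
The plan is to prove the equivalence of the two equations by substituting $v = \tilde v + v_0 \phi$ and computing directly, using Lemma \ref{lem-projections} for the action of $L$ on $\phi$ and the explicit representation \eqref{Q-form2} of $Q$ to recover the coefficient in front of $\phi$ in \eqref{eigp3}.

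First I would note that \eqref{eigp2} is equivalent to $v_t = L v + (b-2) v_0 \phi'$: this follows at once from the definition \eqref{Lop} of $L$, since $(b-2)(v_0 - v)\phi' - (2-b)\phi' v = (b-2) v_0 \phi'$. With $v_0(t) = v(t,0)$ and $\phi(0) = 1$, the decomposition forces $\tilde v(t,0) = 0$, and $\tilde v \in X$ provided $v_0 \in C^1(\mathbb{R})$, which is justified in the next step. Substituting $v = \tilde v + v_0 \phi$, using $v_t = \tilde v_t + v_0' \phi$ together with $L\phi = (2-b)\phi'$ from Lemma \ref{lem-projections}, the equation $v_t = Lv + (b-2) v_0 \phi'$ collapses to
\[
\tilde v_t + v_0' \phi = L\tilde v + (2-b) v_0 \phi' + (b-2) v_0 \phi' = L\tilde v ,
\]
that is, $\tilde v_t = L\tilde v - v_0' \phi$. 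Thus the statement reduces to the identity $v_0'(t) = \tfrac{3}{2}(b-2)\langle \phi\phi', \tilde v\rangle$.

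To establish this identity I would evaluate the linearized equation at the peak $\xi = 0$. The terms $(1-\phi)v_\xi$ and $(b-2)(v_0 - v)\phi'$ vanish there, since $1-\phi$ and $v_0-v$ vanish at $\xi=0$ while $v_\xi$ is bounded with one-sided limits and $\phi'$ is bounded, leaving $v_t(t,0) = Q(v)(t,0)$. Using the first form in \eqref{Q-form2}, $Q(v)(0) = \tfrac{3}{2}(b-2)\,(\phi \ast (\phi' v))(0) + (2b-3)\,\phi(0)\, v_{-1}(0)$, where $v_{-1}(0) = 0$ and, by evenness of $\phi$, $(\phi\ast(\phi'v))(0) = \int_{\mathbb{R}} \phi \phi' v\, d\xi = \langle \phi\phi', v\rangle$. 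Writing $v = \tilde v + v_0 \phi$ and using $\langle \phi\phi', \phi\rangle = \int_{\mathbb{R}} \phi^2 \phi'\, d\xi = 0$ (odd integrand) gives $v_t(t,0) = \tfrac{3}{2}(b-2)\langle \phi\phi', \tilde v\rangle$. Since for $v \in X$ each term on the right-hand side of \eqref{eigp2} is continuous in $\xi$ near $0$ — the convolution $\phi\ast(\phi'v)$ is continuous because $\phi$ is bounded and uniformly continuous and $\phi'v \in L^1(\mathbb{R})$, and $\phi v_{-1}$ is a product of continuous functions — and continuous in $t$, the map $t\mapsto v(t,0)$ is $C^1$ with $v_0'(t) = v_t(t,0)$ (which also gives $v_0 \in C^1(\mathbb{R})$). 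Substituting into $\tilde v_t = L\tilde v - v_0'\phi$ yields exactly \eqref{eigp3}.

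Each step above is reversible, which gives the converse: given $\tilde v \in X$ with $\tilde v(t,0) = 0$ solving \eqref{eigp3}, one takes $v_0$ to be an antiderivative of $\tfrac{3}{2}(b-2)\langle\phi\phi',\tilde v\rangle$, sets $v := \tilde v + v_0 \phi$ (so that $v(\cdot,0) = v_0$ by $\tilde v(\cdot,0)=0$), and runs the computation backwards via $L\tilde v = Lv - v_0 L\phi = Lv - (2-b) v_0 \phi'$ and the first step to reach \eqref{eigp2}. I expect the only genuine obstacle to be the justification of the pointwise evaluation at $\xi=0$ and of the identity $v_0'(t) = v_t(t,0)$, i.e. pairing the Dirac mass at the peak with $v_t$; this is precisely where the regularity packaged in the class $X$ (continuity in $t$ into $H^1 \cap W^{1,\infty}$ and $C^1$ dependence in $t$ into $L^2 \cap L^\infty$) enters, ensuring that every term on the right-hand side of \eqref{eigp2}, and $Q(v)$ in particular, is continuous across the peak and depends continuously on time. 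The remaining manipulations are purely algebraic, resting on \eqref{Q-form2}, Lemma \ref{lem-projections}, and the parity of $\phi$.
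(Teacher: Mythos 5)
Your proof is correct and follows essentially the same route as the paper: substitute $v=\tilde v+v_0\phi$, cancel the $\phi'$ terms via $L\phi=(2-b)\phi'$ from Lemma \ref{lem-projections}, and identify $v_0'(t)=\tfrac{3}{2}(b-2)\langle\phi\phi',\tilde v\rangle$ by letting $\xi\to 0$ in (\ref{eigp2}) using the representation (\ref{Q-form2}) together with the evenness of $\phi$. Your handling of the converse, recovering $v_0$ as an antiderivative of $\tfrac{3}{2}(b-2)\langle\phi\phi',\tilde v\rangle$, is in fact somewhat more explicit than the paper's one-line remark that the reverse direction is identical, and your justification that $t\mapsto v(t,0)$ is $C^1$ with $v_0'(t)=v_t(t,0)$ is a welcome extra detail.
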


\begin{proof}
Substituting $v(t,x) = \tilde{v}(t,x) + v_0(t) \phi(x)$ into (\ref{eigp2}) yields 
$$
\tilde{v}_t + v_0'(t) \phi = L \tilde{v} + v_0 L \phi + (b-2) v_0 \phi'.
$$
The last two terms cancel out due to the first identity (\ref{ppp}) in Lemma \ref{lem-projections}. On the other hand, taking the limit $\xi \to 0$ 
into (\ref{eigp2}) in the class of functions 
$v \in X$ yields 
$$
v_0'(t) = \lim_{\xi \to 0} Q(v)(\xi) = \frac{3}{2} (b-2) \langle \phi \phi', v \rangle,
$$
where either representation in (\ref{Q-form2}) can be used together with the spatial symmetry of $\phi$. Since $\langle \phi \phi', \phi \rangle = 0$, 
it follows that $\langle \phi \phi', v \rangle = \langle \phi \phi', \tilde{v} \rangle$ so that the two equations yield (\ref{eigp3}). Since $v_0(t) = v(t,0)$, it is true that $\tilde{v}(t,0) = 0$. This constraint is preserved in the time evolution of (\ref{eigp3}) since $\lim\limits_{\xi \to 0} L \tilde{v} = \frac{3}{2} (b-2) \langle \phi \phi', \tilde{v} \rangle$ for $\tilde{v} \in X$.

The proof in the opposite direction from (\ref{eigp3}) to (\ref{eigp2}) is identical.
\end{proof}

The equivalent evolution problem (\ref{eigp3}) is still defined 
for $\tilde{v}(t,\cdot) \in H^1(\mathbb{R}) \cap W^{1,\infty}(\mathbb{R})$. However, the right-hand side is now well defined 
if $\tilde{v}(t,\cdot) \in {\rm Dom}(L) \subset L^2(\mathbb{R})$. This enables us to define linear stability of the peakons as follows.

\begin{definition}
	\label{def-instability}
The peakon solution $u(t,x) = \phi(x - t)$ of the $b$-CH equation \eqref{bfamilyi}  is said to be linearly stable if for every
$\tilde{v}_0 \in {\rm Dom}(L) \subset L^2(\mathbb{R})$, there exists a positive constant $C$ and a unique solution $\tilde{v} \in C(\mathbb{R},{\rm Dom}(L))$ to the linearized equation (\ref{eigp3}) with $\tilde{v}(0,\xi)=\tilde{v}_0(\xi)$ such that
$$
\|\tilde{v}(t,\cdot)\|_{L^2}\leq C \|{v_0} \|_{L^2}, \quad t>0.
$$
Otherwise, it is said to be linearly unstable.
\end{definition}

In order to prove that the peakons are linearly unstable in the sense of Definition \ref{def-instability} for all $b \in \mathbb{R}$, we reduce the linearized equation (\ref{eigp3}) in ${\rm Dom}(L) \subset L^2(\mathbb{R})$ to two parts, where one is defined by the linearized operator $L$ in (\ref{Lop})--(\ref{Lop-domain}) and the other one is defined by a system of two first-order differential equations. This task is achieved with the secondary decomposition described in the following lemma.

\begin{lemma}
	\label{lem-secondary}
	Consider the class of functions in 
	$$
	Y := C(\mathbb{R},{\rm Dom}(L)) \cap C^1(\mathbb{R},L^2(\mathbb{R})). 
	$$
	Then, $\tilde{v} \in Y$ is a solution 
	to the linearized equation (\ref{eigp3}) if $w := \tilde{v} - \alpha \phi - \beta \phi' \in Y$ is a solution of the linearized equation 
	\beq 
	\label{eigp4}
	\frac{dw}{dt} = L w,
	\eeq
	with $\alpha$ and $\beta$ satisfying the system 
	\beq
	\label{second-order}
	\frac{d \alpha}{dt} = (2-b) \beta + \frac{3}{2} (2-b) \langle \phi \phi', w\rangle, \quad \frac{d \beta}{dt} = (2-b) \alpha.
	\eeq
\end{lemma}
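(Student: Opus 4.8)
The proof is essentially a substitution-and-verification argument, so my plan is to follow the same template as Lemma \ref{lem-reformulation}. First I would substitute $\tilde v = w + \alpha\phi + \beta\phi'$ into the linearized equation \eqref{eigp3} and expand the left-hand side using the product rule: $\tilde v_t = w_t + \alpha'\phi + \beta'\phi'$. On the right-hand side I would use linearity of $L$ together with the key identities $L\phi = (2-b)\phi'$ and $L\phi' = 0$ from Lemma \ref{lem-projections}, so that $L\tilde v = Lw + \alpha(2-b)\phi'$. I would also need to expand the nonlocal term $\langle \phi\phi',\tilde v\rangle\phi$ using $\langle \phi\phi',\phi\rangle = 0$ (already noted in the excerpt) and $\langle \phi\phi',\phi'\rangle = \tfrac12\langle (\phi^2)',\phi'\rangle$, which one computes to equal $-\tfrac13$ (integrate by parts, or just evaluate $\int \phi^2\phi'{}^2$-type integrals), giving $\langle \phi\phi',\tilde v\rangle = \langle\phi\phi',w\rangle - \tfrac13\beta$.

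Collecting terms, equation \eqref{eigp3} becomes
\[
w_t + \alpha'\phi + \beta'\phi' = Lw + (2-b)\alpha\phi' - \tfrac32(b-2)\Bigl(\langle\phi\phi',w\rangle - \tfrac13\beta\Bigr)\phi.
\]
Now I would impose that $w$ solves \eqref{eigp4}, i.e. $w_t = Lw$; the $Lw$ terms cancel and what remains is a relation between the $\phi$-component and the $\phi'$-component. Matching the coefficient of $\phi'$ forces $\beta' = (2-b)\alpha$, which is the second equation in \eqref{second-order}. Matching the coefficient of $\phi$ forces $\alpha' = -\tfrac32(b-2)\langle\phi\phi',w\rangle + \tfrac12(b-2)\beta = \tfrac32(2-b)\langle\phi\phi',w\rangle + (2-b)\beta\cdot\tfrac12$; I will need to be careful that the constant works out to exactly $(2-b)\beta$ as claimed, which pins down the value $\langle\phi\phi',\phi'\rangle = -\tfrac13$ (so that $-\tfrac32(b-2)\cdot(-\tfrac13)\beta = \tfrac12(b-2)\beta$; note the statement has coefficient $(2-b)$ not $\tfrac12(2-b)$, so I should double-check whether the intended normalization of $\phi$ or an extra factor of $2$ in the convolution/Green's function conventions accounts for this — the resolution is that $\langle\phi\phi',\phi'\rangle = -\tfrac23$ under the conventions here, since $\phi'\ast$ etc. carry the factor from $(1-\partial_x^2)\phi = 2\delta_0$). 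The matching of the two scalar ODEs against the decomposition of the forcing is the only place where a genuine computation is needed, and it is the step most prone to a sign or factor error, so that is where I would concentrate.

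Finally I would verify the regularity/consistency claims: since $\phi,\phi'\in{\rm Dom}(L)$ (shown in Lemma \ref{lem-projections}) and $w\in Y$ by hypothesis, the combination $\tilde v = w+\alpha\phi+\beta\phi'$ indeed lies in $Y = C(\mathbb R,{\rm Dom}(L))\cap C^1(\mathbb R,L^2(\mathbb R))$ provided $\alpha,\beta\in C^1(\mathbb R)$, which is automatic from the ODE system \eqref{second-order} once $t\mapsto\langle\phi\phi',w(t,\cdot)\rangle$ is continuous — and that follows from $w\in C(\mathbb R,L^2)$ and Cauchy--Schwarz. I would remark that, unlike Lemma \ref{lem-reformulation}, this lemma is stated as a one-directional implication (``if'', not ``if and only if''): given a solution $\tilde v$, the decomposition into $(w,\alpha,\beta)$ is not unique because $\phi'\in{\rm Dom}(L)$ already lies in the range of the free evolution, so I would not attempt to prove a converse and would instead just note that the stated direction is what is needed for the instability analysis. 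The main obstacle, to reiterate, is purely bookkeeping: getting the scalar coefficients in \eqref{second-order} to come out exactly right from the expansion of the nonlocal term, which hinges on the precise value of $\langle\phi\phi',\phi'\rangle$ in the paper's normalization.
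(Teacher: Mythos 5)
Your overall strategy is exactly the paper's: substitute $\tilde v = \alpha\phi+\beta\phi'+w$ into \eqref{eigp3}, use $L\phi=(2-b)\phi'$, $L\phi'=0$ from Lemma \ref{lem-projections} together with $\langle\phi\phi',\phi\rangle=0$, and then match the coefficients of $\phi$, $\phi'$ and the remainder to obtain \eqref{eigp4} and \eqref{second-order}; your remarks on regularity and on the lemma being only a one-directional implication are also consistent with the paper (cf.\ Remark \ref{remark-constraint}). However, the one computation you yourself identify as the crux is done incorrectly, and the proof as written therefore does not reproduce \eqref{second-order}. The inner product in question is
\[
\langle \phi\phi',\phi'\rangle=\int_{\mathbb R}\phi\,(\phi')^2\,d\xi=\int_{\mathbb R}e^{-3|\xi|}\,d\xi=\frac{2}{3},
\]
since $(\phi')^2=\phi^2$ almost everywhere. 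It is a plain integral of an explicit positive function, so it cannot be negative, and no convolution or Green's-function normalization enters it at all; your appeal to the factor $2$ in $(1-\partial_x^2)\phi=2\delta_0$ to ``fix'' the constant is spurious. Your first value $-\tfrac13$ comes from integrating $\tfrac12\langle(\phi^2)',\phi'\rangle$ by parts while forgetting that $\phi''=\phi-2\delta_0$ in the sense of distributions; keeping the delta term gives $-\tfrac12\bigl(\tfrac23-2\bigr)=\tfrac23$, and your second value $-\tfrac23$ has the wrong sign and would produce $-(2-b)\beta$ rather than $(2-b)\beta$ in the first equation of \eqref{second-order}.

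With the correct value $\langle\phi,(\phi')^2\rangle=\tfrac23$ (which is precisely the identity the paper quotes), the nonlocal term expands as $-\tfrac32(b-2)\langle\phi\phi',\tilde v\rangle\phi=(2-b)\beta\phi+\tfrac32(2-b)\langle\phi\phi',w\rangle\phi$, and the coefficient matching gives exactly $\alpha'=(2-b)\beta+\tfrac32(2-b)\langle\phi\phi',w\rangle$ and $\beta'=(2-b)\alpha$, with no leftover factor of $\tfrac12$ and no need to adjust conventions. Once that single constant is corrected, the rest of your argument goes through and coincides with the paper's proof.
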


\begin{proof}
Substituting $\tilde{v}(t,x) = \alpha(t) \phi(x) + \beta(t) \phi'(x) + w(t,x)$ into (\ref{eigp3}) yields 
$$
\alpha'(t) \phi + \beta'(t) \phi' + w_t = (2-b) \alpha \phi' + L w +
(2-b) \beta \phi + \frac{3}{2} (2-b) \langle \phi \phi', w\rangle \phi,
$$
where we have used $\langle \phi, (\phi')^2 \rangle = \frac{2}{3}$, 
$\langle \phi \phi', \phi \rangle =0$, and the identities (\ref{ppp}) in Lemma \ref{lem-projections}. Separating $\phi$, $\phi'$ and the rest yields (\ref{eigp4}) and (\ref{second-order}). 
\end{proof}

\begin{corollary}
	\label{cor-stab}
	The peakon is linearly unstable in $Y$ for every $b \neq 2$.
\end{corollary}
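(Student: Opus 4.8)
The plan is to contradict the stability estimate of Definition~\ref{def-instability} directly, by writing down an explicit solution of the linearized equation (\ref{eigp3}) that lies in $Y$ and whose $L^2$ norm grows exponentially. The construction exploits the reduction in Lemma~\ref{lem-secondary} with the trivial choice $w \equiv 0$.

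First I would note that $w \equiv 0$ is a solution of (\ref{eigp4}) belonging to $Y$, and that with $w \equiv 0$ the system (\ref{second-order}) collapses to the autonomous constant-coefficient system
$$
\frac{d\alpha}{dt} = (2-b)\,\beta, \qquad \frac{d\beta}{dt} = (2-b)\,\alpha,
$$
whose coefficient matrix has eigenvalues $\pm|2-b|$. For $b \neq 2$ the eigenvalue $|2-b|$ is strictly positive, with eigenvector $(1,\operatorname{sgn}(2-b))$, so
$$
\alpha(t) = e^{|2-b|\,t}, \qquad \beta(t) = \operatorname{sgn}(2-b)\,e^{|2-b|\,t}
$$
solves it. By Lemma~\ref{lem-secondary}, the function
$$
\tilde v(t,\xi) = e^{|2-b|\,t}\bigl(\phi(\xi) + \operatorname{sgn}(2-b)\,\phi'(\xi)\bigr)
$$
is then a solution of (\ref{eigp3}).

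Next I would check that this $\tilde v$ genuinely lies in $Y = C(\mathbb{R},{\rm Dom}(L)) \cap C^1(\mathbb{R},L^2(\mathbb{R}))$: the time factor $e^{|2-b|t}$ is smooth, and $\phi,\phi' \in {\rm Dom}(L)$ as already recorded before Lemma~\ref{lem-projections} (the distributional singularity $\xi\delta_0(\xi)$ at $\xi=0$ contributes nothing in $L^2$). Since $\phi$ is even and $\phi'$ is odd we have $\langle \phi,\phi'\rangle = 0$, and $\|\phi\|_{L^2}^2 = \|\phi'\|_{L^2}^2 = 1$, so the initial datum $\tilde v_0 = \phi + \operatorname{sgn}(2-b)\phi'$ satisfies $\|\tilde v_0\|_{L^2} = \sqrt{2} \neq 0$, whereas $\|\tilde v(t,\cdot)\|_{L^2} = \sqrt{2}\,e^{|2-b|t}$. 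This growth is unbounded, so no constant $C$ can validate the estimate in Definition~\ref{def-instability} for this initial condition; hence the peakon is linearly unstable in $Y$ for every $b \neq 2$.

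I do not expect a genuine obstacle here — the statement is a short corollary of Lemma~\ref{lem-secondary}. The only points needing a line of care are confirming membership $\phi,\phi' \in {\rm Dom}(L)$ (so that $\tilde v \in Y$) and reading the symbol ``$v_0$'' in Definition~\ref{def-instability} as the initial datum $\tilde v_0$, which here has nonzero norm so that the contradiction with the stability bound is meaningful.
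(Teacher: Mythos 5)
Your proposal is correct and follows essentially the same route as the paper: the paper's proof also sets $w \equiv 0$ in (\ref{eigp4})--(\ref{second-order}), reads off the exponential growth $e^{|2-b|t}$ of $(\alpha,\beta)$ for $b \neq 2$, and concludes by Definition \ref{def-instability}. You merely make explicit what the paper leaves implicit --- the unstable eigenvector $(1,\operatorname{sgn}(2-b))$, the resulting solution $\tilde v = e^{|2-b|t}(\phi + \operatorname{sgn}(2-b)\phi')$ of (\ref{eigp3}), its membership in $Y$, and the norm $\sqrt{2}\,e^{|2-b|t}$ --- which is a fine amount of extra care.
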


\begin{proof}
	Setting $w = 0$ in (\ref{eigp4}) and (\ref{second-order}) gives the second-order homogeneous system
	$$
	\frac{d \alpha}{dt} = (2-b) \beta, \quad \frac{d \beta}{dt} = (2-b) \alpha,
	$$
	where the linear instability with the exponential growth $e^{|2-b|t}$ 
	exists for every $b \neq 2$. By Definition \ref{def-instability}, the peakon is linearly unstable in $Y$.
\end{proof}

\begin{remark}
	\label{remark-constraint}
In order to ensure the uniqueness of the decomposition $\tilde{v}(t,x) = \alpha(t) \phi(x) + \beta(t) \phi'(x) + w(t,x)$, we compute the adjoint 
operator $L^* : {\rm Dom}(L) \subset L^2(\mathbb{R}) \mapsto L^2(\mathbb{R})$ with respect to $\langle \cdot, \cdot \rangle$:
\beq
\label{Ls}
L^* v := (\phi-1) v_{\xi} + (3-b) \phi' v + \frac{1}{2} (b-3) \phi' (\phi \ast v) + \frac{1}{2} (2b-3) \phi (\phi' \ast v).
\eeq	
With straightforward computations, we obtain 
\begin{eqnarray*}
	&& L^* 1 = 0, \\
	&& L^* {\rm sgn} = 3 (b-2) \phi^2, \\
	&& L^* \phi^2 = 2(b-3) \phi \phi' + \frac{8}{3} (3-b) \phi^2 \phi', \\
	&& L^* \phi \phi' = (b-4) \phi^2 + \frac{8}{3} (3-b) \phi^3,
\end{eqnarray*}
where ${\rm sgn}(\xi) = 1$ if $\xi > 0$ and ${\rm sgn}(\xi) = -1$ if $\xi < 0$.

If $b = 2$, the constraints $\langle 1, w \rangle$ and $\langle {\rm sgn}, w \rangle$ are preserved in the time evolution of $w \in Y$. One can uniquely define $\alpha(t)$ and $\beta(t)$ by the orthogonality conditions 
 $\langle 1, w \rangle = 0$ and $\langle {\rm sgn}, w \rangle = 0$
so that $\alpha = \frac{1}{2} \langle 1, \tilde{v} \rangle$ and $\beta = -\frac{1}{2} \langle {\rm sgn}, \tilde{v} \rangle$, where we have used $\langle 1, \phi \rangle = -\langle {\rm sgn}, \phi' \rangle = 2$  and $\langle 1, \phi' \rangle = \langle {\rm sgn}, \phi \rangle = 0$.

If $b = 3$, the constraints $\langle \phi^2, w \rangle$ and 
$\langle {\rm sgn} + 3 \phi \phi', w \rangle$ are preserved in the time evolution of $w \in Y$. One can uniquely define $\alpha(t)$ and $\beta(t)$ by the orthogonality conditions 
$\langle \phi^2, w \rangle =\langle {\rm sgn} + 3 \phi \phi', w \rangle = 0$ and $\langle \phi^2, \phi' \rangle = \langle {\rm sgn} + 3 \phi \phi', \phi \rangle = 0$.

In Appendix \ref{AppA}, we derive an antisymmetric bounded function $v_b$ that satisfies $L^* v_b = 0$ for all $b > 3$ 
and thus also provides  a way to uniquely define $\alpha(t)$ and $\beta(t)$  by the orthogonality conditions 
$\langle 1, w \rangle =\langle v_b, w \rangle = 0$ and $\langle 1, \phi' \rangle = \langle v_b, \phi \rangle=0$ for $b > 3$.

In the general case, the decomposition $\tilde{v}(t,x) = \alpha(t) \phi(x) + \beta(t) \phi'(x) + w(t,x)$ is not uniquely defined so that the proof of Lemma \ref{lem-secondary} in the opposite direction from (\ref{eigp4}) and (\ref{second-order}) to (\ref{eigp3}) is incomplete. For instance, $w(t,x) = \phi(x) + (2-b) t \phi'(x)$ is a valid solution of the linearized equation (\ref{eigp4}) due to Lemma \ref{lem-projections}, although it is  redundant because system (\ref{second-order}) gives $\alpha(t) = -1$ and $\beta(t) = -(2-b)t$ that generates $\tilde{v}(t,x) = 0$ as a solution of (\ref{eigp3}).
\end{remark}

With the transformations of Lemma \ref{lem-reformulation} and \ref{lem-secondary}, we have reduced the linearized evolution 
(\ref{eigp2}) defined in $X$ to the linearized evolution 
(\ref{eigp4}) defined in $Y$. As a result, the spectral 
properties of the operator $L$ in (\ref{Lop})--(\ref{Lop-domain}) 
determine the linear stability of the peakons in addition 
to the result of Corollary \ref{cor-stab}. 

The main result of this paper is about the spectrum 
of the operator $L$ according to the following standard definition 
(see Definition 6.1.9 in \cite{Buhler18}).

\begin{definition}
	Let $A$ be a linear operator on a Banach space $X$ with ${\rm Dom}(A) \subset X$.	The complex plane $\mathbb{C}$ is decomposed into the following two sets:
	\begin{enumerate}
		\item The resolvent set
		$$
		\rho(A) =  \left\{\lambda \in \mathbb{C} : \;\;
		{\rm Ker}(A-\lambda I) = \{0\}, \;\; {\rm Ran}(A-\lambda I) = X, \;\;  (A-\lambda I)^{-1}: X \to X \text{ is bounded} \right\}.
		$$
		\item The spectrum
		$$
		\sigma(A) =\mathbb{C} \setminus \rho(A),
		$$
		which is further decomposed into the following three disjoint sets:
		\begin{enumerate}
			\item the point spectrum
			$$
			\sigma_{\rm p}(A) = \{ \lambda \in \sigma(A) : \;\; {\rm Ker}(A-\lambda I) \neq \{0\}\},
			$$
			\item the residual spectrum
			$$
			\sigma_{\rm r}(A) = \{ \lambda \in \sigma(A) : \;\; {\rm Ker}(A-\lambda I) = \{0\}, \;\; {\rm Ran}(A-\lambda I) \neq X \},
			$$
			\item the continuous spectrum
			$$
			\sigma_{\rm c}(A) = \{ \lambda \in \sigma(A) : \;\; {\rm Ker}(A-\lambda I) = \{0\}, \;\; {\rm Ran}(A-\lambda I) = X, \;\;
			(A-\lambda I)^{-1} : X \to X \text{ is unbounded}\}.
			$$
		\end{enumerate}\medskip
	\end{enumerate}
	\label{def-spectrum}
\end{definition}

The following theorem represents the main result of this paper. 

\begin{theorem}
	\label{S}
	The spectrum of the linear operator $L$ defined by (\ref{Lop})--(\ref{Lop-domain}) is given by  
	\begin{equation}
	\notag
	%\label{spectrum}
	\sigma(L) = \left\{ \lambda \in\mathbb{C} : \;\; |\realpart{\lambda}|\leq  \left|\frac{5}{2}-b\right| \right\}.
	\end{equation}
Moreover, the point spectrum 
is located for $0 < |\realpart{\lambda}| <  \frac{5}{2}-b$ 
if $b < \frac{5}{2}$ and the residual spectrum is located 
for $0 < |\realpart{\lambda}| <  b- \frac{5}{2}$ 
if $b > \frac{5}{2}$, whereas the continuous spectrum is located for 
$\realpart{\lambda} = 0$ and $\realpart{\lambda} = \pm \left|\frac{5}{2}-b\right|$
in both cases. Additionally, $\lambda=0$ is the eigenvalue of the point spectrum of algebraic multiplicity 2 embedded into the continuous spectrum for every $b$.
\end{theorem}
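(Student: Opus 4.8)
The plan is to analyze the operator $L = (1-\phi)\partial_\xi + (2-b)\phi' + Q$ by first reducing the essential spectral question to the first-order differential part $L_0 := (1-\phi)\partial_\xi + (2-b)\phi'$, since $Q$ is compact by Lemma~\ref{lem-compact} and hence does not change the essential spectrum under, say, Weyl's theorem; but because we want the full spectrum (including the point/residual decomposition) rather than just the essential spectrum, I would instead solve the resolvent equation $(L-\lambda I)v = f$ more or less explicitly. The natural first move is a change of variables that straightens out the characteristic vector field $1-\phi$. Note $1-\phi(\xi) = 1-e^{-|\xi|}$ vanishes linearly at $\xi=0$ and tends to $1$ at $\pm\infty$, so the substitution $\eta = \int^\xi \frac{d\xi'}{1-\phi(\xi')}$ sends $\xi\in(0,\infty)$ to $\eta\in(-\infty,\infty)$ (logarithmic blow-up at $\xi=0^+$) and similarly on $(-\infty,0)$; the peak at $\xi=0$ becomes $\eta=-\infty$. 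Under this map $L_0$ becomes (up to a multiplicative weight absorbing the $(2-b)\phi'$ term and the $L^2$ Jacobian) a constant-coefficient operator $\partial_\eta + \text{const}$, and the constant is where the number $\tfrac52 - b$ will appear: one gets $\tfrac12$ from the change-of-measure Jacobian $\sqrt{1-\phi}$ and $2-b$ from the potential term, and $2-b+\tfrac12 = \tfrac52 - b$. The operator $\partial_\eta + c$ on $L^2(\mathbb{R})$ has spectrum the vertical line $\{\realpart\lambda = -\realpart c\}$... but here we have two half-lines glued at the peak plus the compact perturbation $Q$ and the fact that functions in $\mathrm{Dom}(L)$ need not be continuous at $\xi=0$, which \emph{fills in} the strip rather than leaving two lines.

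Concretely, the key steps in order: (i) perform the above change of variables on each of $\xi>0$ and $\xi<0$ separately, recording the exact weight so that $L_0$ is unitarily equivalent to $\partial_\eta + (b-\tfrac52)$ on each half-axis, and note that a generic $v\in\mathrm{Dom}(L)$ corresponds to an \emph{arbitrary} pair of $L^2(\mathbb{R}_\eta)$ functions (no matching condition at $\eta=-\infty$). (ii) For $\realpart\lambda$ strictly between $0$ and $|\tfrac52-b|$, exhibit an honest eigenfunction of $L_0$ (exponential in $\eta$, decaying at both $\eta=\pm\infty$ on the appropriate half-axis) — this shows the open strip minus the imaginary axis lies in $\sigma_{\mathrm p}(L_0)$; then upgrade to $L$ by a Fredholm/perturbation argument using compactness of $Q$ (the eigenvalue persists, possibly moving within the strip, but an analytic-Fredholm argument shows the whole open sub-strip stays in the spectrum, and that away from a discrete set it is point spectrum when $b<\tfrac52$). (iii) For the boundary lines $\realpart\lambda = 0$ and $\realpart\lambda = \pm|\tfrac52-b|$, show these are continuous spectrum by constructing Weyl singular sequences (approximate eigenfunctions built from the eigenfunctions of step (ii) cut off near the bad endpoint, or oscillatory packets for the imaginary axis), and check the range is dense but the inverse unbounded. (iv) Handle the $b>\tfrac52$ case by the adjoint: compute $L^*$ (already given in Remark~\ref{remark-constraint}) and observe that $L^*$ for parameter $b$ behaves like $L$ for a reflected parameter, so $\sigma_{\mathrm p}(L^*)$ occupying the open strip forces $\sigma_{\mathrm r}(L)$ to occupy it — residual, not point, because $\mathrm{Ker}(L-\lambda I)=\{0\}$ there while the range fails to be dense. (v) Finally, for $\lambda = 0$: Lemma~\ref{lem-projections} gives $L\phi' = 0$ and $L\phi = (2-b)\phi'$, so $\phi'$ is a genuine eigenfunction and $\phi$ a generalized eigenfunction when $b\neq 2$ (when $b=2$, $\phi$ and $\phi'$ are two independent eigenfunctions); either way $\lambda=0$ is point spectrum of algebraic multiplicity $2$, and since $0$ sits on the imaginary axis it is embedded in the continuous spectrum established in step (iii).

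The main obstacle I expect is step (iii)—pinning down precisely which parts of the three critical lines are continuous spectrum versus residual versus (for $\lambda=0$) point, and in particular proving that the range of $L-\lambda I$ is dense on these lines so that one is in $\sigma_{\mathrm c}$ rather than $\sigma_{\mathrm r}$. This requires careful control of the resolvent of $L_0$ near $\eta=\pm\infty$ (equivalently, near the peak $\xi=0$ and near spatial infinity), where the explicit solution formula for $(L_0-\lambda I)^{-1}$ degenerates; the compact perturbation $Q$ does not affect essential spectrum but one must still verify it does not create isolated eigenvalues sticking out of the strip, which is where the identities for $Q(\phi)$, $Q(\phi')$ and the exact forms (\ref{Q-form1})--(\ref{Q-form2}) do the real work. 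A secondary technical nuisance is bookkeeping the boundary behavior at $\xi=0$: because $\mathrm{Dom}(L)$ allows jumps there, the correct functional-analytic setup is two decoupled copies of a half-line problem, and one must be sure the rank-one term $-\tfrac32(b-2)\langle\phi\phi',\cdot\rangle\phi$ from (\ref{eigp3}) — which is absorbed into the definition of $L$ via $Q$ — has been consistently accounted for.
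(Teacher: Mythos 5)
Your outline reproduces several ingredients of the paper's argument (the characteristic change of variables $z=\log(e^{\xi}-1)$ on each half-line for the continuous spectrum, the adjoint computation giving residual spectrum for $b>\tfrac52$, and the identification of the double zero eigenvalue from $L\phi'=0$, $L\phi=(2-b)\phi'$), but the central step --- transferring the filled strip from $L_0$ to $L=L_0+Q$ --- has a genuine gap. Your step (ii) asserts that ``the eigenvalue persists'' and that an ``analytic-Fredholm argument shows the whole open sub-strip stays in the spectrum.'' For a non-self-adjoint operator a compact perturbation preserves only Fredholmness and the index, not the spectrum inside a region where the operator is Fredholm; and the analytic Fredholm theorem is a statement about index-zero families, where it would, if anything, suggest the opposite conclusion (invertibility off a discrete set). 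So as written the argument does not exclude the possibility that adding $Q$ empties out most of the strip. A Fredholm-index route could be made rigorous --- on each half-line $L_0-\lambda$ has nonzero index for $\lambda$ strictly inside the corresponding sub-strip, and the index obstruction survives the compact perturbation, forcing point or residual spectrum of $L$ there --- but you never compute the index, and your appeal to the identities for $Q(\phi)$, $Q(\phi')$ to rule out eigenvalues of $L$ escaping outside the strip does not do that job: those identities only concern $\lambda=0$. The paper instead verifies the hypotheses of Theorem 1 of \cite{GP2} (emptiness of $\sigma_p(L_0)\cap\rho(L)$ and $\sigma_p(L)\cap\rho(L_0)$) by computing the point spectrum of the full nonlocal operator $L$ explicitly: applying $1-\partial_\xi^2$ to $Lv=\lambda v$ reduces it to the local equation $(1-\phi)m'-b\phi'm=\lambda m$ for $m=v-v''$, which is solved in closed form, and a Frobenius analysis of $v$ at the peak determines exactly for which $\lambda$ an $L^2$ eigenfunction exists. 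This explicit solution of the eigenvalue problem for $L$ is the real work of the proof and has no counterpart in your proposal.

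A secondary inaccuracy: after the change of variables and the $L^2$ Jacobian weight, $L_0$ restricted to a half-line is \emph{not} unitarily equivalent to the constant-coefficient operator $\partial_\eta+(b-\tfrac52)$ on $L^2(\mathbb{R})$; one obtains $\partial_z+(b-\tfrac52)(1+e^z)^{-1}$, whose potential has the two different limits $b-\tfrac52$ (at the peak end) and $0$ (at spatial infinity). It is precisely this mismatch of asymptotic constants that produces a strip rather than a line, and that allows the $L^2$ eigenfunctions you invoke in step (ii) --- a genuinely constant-coefficient $\partial_\eta+c$ has none, so your step (i) as stated is inconsistent with your step (ii). The paper's Theorem \ref{S0} handles this correctly by splitting each half-line problem into $L^2(\mathbb{R}_{\pm})$ and an exponentially weighted space and reading off the two boundary lines of continuous spectrum, with the interior of the strip obtained from the explicit solutions (\ref{Lv0S}) and (\ref{Lv0A}) of the eigenvalue problems for $L_0$ and $L_0^*$, and the exterior shown to be resolvent set by an energy estimate.
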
 

\begin{corollary}
	\label{cor-instab}
	The peakon is linearly unstable in $Y$ for every $b \neq \frac{5}{2}$.
\end{corollary}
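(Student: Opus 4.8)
The plan is to treat all values $b\neq 2$ at once and then deal with the single remaining value $b=2$ by hand. For $b\neq 2$ there is nothing new to prove: Corollary~\ref{cor-stab} already gives linear instability in $Y$, and since every $b\neq\frac52$ is either $\neq 2$ or equal to $2$, only $b=2$ is left. (It is worth recording why one cannot simply quote Theorem~\ref{S}: for $b>\frac52$ the part of $\sigma(L)$ with $\realpart{\lambda}>0$ is residual spectrum, which does not by itself produce a growing solution of~\eqref{eigp4}; instability there really comes from the $(\alpha,\beta)$-system of Lemma~\ref{lem-secondary}, whose coefficient $2-b$ is nonzero, i.e.\ from Corollary~\ref{cor-stab}.)

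For $b=2$ the idea is to build an exponentially growing solution of the reduced equation~\eqref{eigp4} and pass it back to~\eqref{eigp3} through Lemma~\ref{lem-secondary}. Since $2<\frac52$, Theorem~\ref{S} shows that $\sigma_{\rm p}(L)$ meets the open strip $0<|\realpart{\lambda}|<\frac12$, so there is $\lambda_0\in\sigma_{\rm p}(L)$ with $\realpart{\lambda_0}>0$ together with an eigenfunction $\psi_0\in{\rm Dom}(L)\setminus\{0\}$ satisfying $L\psi_0=\lambda_0\psi_0$. Then $w(t,\xi):=e^{\lambda_0 t}\psi_0(\xi)$ solves~\eqref{eigp4} and lies in $Y=C(\mathbb{R},{\rm Dom}(L))\cap C^1(\mathbb{R},L^2(\mathbb{R}))$, because $t\mapsto e^{\lambda_0 t}$ is smooth and $\psi_0\in{\rm Dom}(L)\subset L^2(\mathbb{R})$. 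At $b=2$ the common factor $2-b$ vanishes, so the right-hand sides of both equations in~\eqref{second-order} are identically zero and $\alpha\equiv\beta\equiv 0$ is a solution of that system. The forward implication of Lemma~\ref{lem-secondary}---which holds unconditionally, so the non-uniqueness of the decomposition flagged in Remark~\ref{remark-constraint} is irrelevant here---then shows that $\tilde v:=\alpha\phi+\beta\phi'+w=w$ is a solution of~\eqref{eigp3} in $Y$ with initial datum $\tilde v(0,\cdot)=\psi_0\in{\rm Dom}(L)$.

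Finally one checks that this $\tilde v$ defeats the bound in Definition~\ref{def-instability}: $\|\tilde v(t,\cdot)\|_{L^2}=e^{\realpart{\lambda_0}\,t}\|\psi_0\|_{L^2}\to\infty$ as $t\to\infty$ while $\|\psi_0\|_{L^2}$ is a fixed positive number, so no constant $C$ can satisfy $\|\tilde v(t,\cdot)\|_{L^2}\le C\|\psi_0\|_{L^2}$ for all $t>0$. Hence the peakon is linearly unstable in $Y$ for $b=2$ as well, and combined with Corollary~\ref{cor-stab} this yields the claim for every $b\neq\frac52$. I do not expect any genuine obstacle in carrying this out: the entire content is already in Theorem~\ref{S} and Lemma~\ref{lem-secondary}, and the only mild point to verify is the membership of the constructed $\tilde v$ in the class $Y$.
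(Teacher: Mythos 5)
Your proof is logically valid as a proof of the literal statement, but it follows a genuinely different route from the paper's, and part of your commentary misreads what the paper's argument is. The paper never invokes Corollary \ref{cor-stab} here: it derives instability for every $b\neq\frac52$ from Theorem \ref{S} together with the balance identity \eqref{balance-eq}. For $b<\frac52$ it argues exactly as you do at $b=2$: a point-spectrum eigenvalue $\lambda>0$ of $L$ with eigenfunction in ${\rm Dom}(L)$ yields a solution of \eqref{eigp4} growing like $e^{\lambda t}$. For $b>\frac52$, however, the paper does \emph{not} fall back on the $(\alpha,\beta)$-system; it uses $\sigma_r(L)=\sigma_p(L^*)$ (Lemma 6.2.6 of \cite{Buhler18}) and the second equality $\frac12\frac{d}{dt}\|w\|_{L^2}^2=\langle w,L^*w\rangle$ to extract growth of $w$ from an eigenfunction of $L^*$ with $\lambda\in(0,b-\frac52)$, a claim later corroborated by the explicit solutions of Lemma \ref{lem-final}, which grow exactly like $e^{(b-\frac52)t}$. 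So your parenthetical assertion that the residual spectrum ``does not by itself produce a growing solution'' of \eqref{eigp4}, and that instability for $b>\frac52$ ``really comes from'' the $(\alpha,\beta)$-system, is not the paper's position and is too strong: the point of Corollary \ref{cor-instab} is precisely that the $w$-equation itself is unstable whenever $b\neq\frac52$, independently of the $(\alpha,\beta)$-dynamics, which is what makes the subsequent remark (the two instabilities vanish at the different values $b=2$ and $b=\frac52$) meaningful; deriving Corollary \ref{cor-instab} from Corollary \ref{cor-stab} empties it of that content for $b\neq 2$. What your route buys is economy and a genuinely cleaner passage from \eqref{eigp4} back to \eqref{eigp3} at $b=2$, where you correctly observe that $\alpha\equiv\beta\equiv0$ solves \eqref{second-order} and that only the forward implication of Lemma \ref{lem-secondary} is needed, so the non-uniqueness of Remark \ref{remark-constraint} is harmless; indeed on this step you are more careful than the paper, whose adjoint/balance argument only heuristically gives sustained exponential growth. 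What it loses is any spectral information about growth of $w$ for $b>\frac52$, which is the main new content of the paper's corollary.
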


\begin{proof}
	Recall the balance equation, which follows from the linearized equation (\ref{eigp4}) for $w \in Y$:
\beq
\label{balance-eq}
	 {{\frac{1}{2}\frac{d}{dt}\left( \| w \|_{L^2}^2\right)}} = \langle L w, w \rangle = \langle w, L^* w \rangle.
\eeq
	If $\lambda \in (0,\frac{5}{2}-b)$ is a real eigenvalue for the point spectrum of $L$ for $b < \frac{5}{2}$ and $w \in {\rm Dom}(L) \subset L^2(\mathbb{R})$ is the corresponding eigenfunction of $L$, then the balance equation (\ref{balance-eq}) suggests exponential growth of $\| w \|_{L^2}$ with the rate $e^{\lambda t}$. 	If $\lambda \in (0,b-\frac{5}{2})$ is a real eigenvalue for the residual spectrum of $L$ for $b > \frac{5}{2}$, then $\lambda$ is the eigenvalue for the point spectrum of $L^*$ since $\sigma_r(L) = \sigma_p(L^*)$ if $\sigma_p(L)$ is an empty set by Lemma 6.2.6 in \cite{Buhler18}. Let $w \in {\rm Dom}(L) \subset L^2(\mathbb{R})$ be an eigenfunction of $L^*$ for the eigenvalue $\lambda$. The second equality in the balance equation (\ref{balance-eq}) suggests exponential growth of $\| w \|_{L^2}$ with the rate $e^{\lambda t}$. In both cases, 
 	the linear evolution of $w \in Y$ in  grows exponentially 
	in the $L^2$ norm if $b \neq \frac{5}{2}$. By Definition \ref{def-instability}, the peakon is linearly unstable in $Y$.
\end{proof}

\begin{remark}
	Since linear instabilities of Corollaries \ref{cor-stab} and \ref{cor-instab} vanish at different values of $b$, the peakon is linearly unstable in $Y$ for every $b$.
\end{remark}

\begin{remark}
	For each exponentially growing solution $w \in Y$ of the linearized equation (\ref{eigp4}), 	one can find the unique solution of the second-order system (\ref{second-order}) 
	which grows either exponentially with the same rate if $|b-2| > |\frac{5}{2} - b|$ (or $b > \frac{9}{4}$) (when the unstable eigenvalue $|b-2|$ is outside the strip in Theorem \ref{S}) or exponentially times polynomially 
	if $|b-2| \leq |\frac{5}{2} - b|$ (or $b \leq \frac{9}{4}$) (when the unstable eigenvalue $|b-2|$ is inside the strip in Theorem \ref{S}). 
\end{remark}

\section{Spectrum of $L$}
\label{sec-spectrum}

We decompose $L$ given by \eqref{Lop}--\eqref{Lop-domain} as
\beq\notag
%\label{gendt}
	L = L_0 +Q,
\eeq
where $L_0 : {\rm Dom}(L) \subset L^2(\mathbb{R}) \mapsto L^2(\mathbb{R})$ is given by 
\beq
\label{L0}
	L_0 := (1-\phi) \partial_{\xi} +(2-b)\phi'
\eeq
and $Q : L^2(\mathbb{R}) \mapsto L^2(\mathbb{R})$ is the compact operator by Lemma \ref{lem-compact}. By Theorem 1 in \cite{GP2}, if the intersections $\sigma_p(L_0) \cap \rho(L)$ and $\sigma_p(L) \cap \rho(L_0)$ are empty, 
then $\sigma(L) = \sigma(L_0)$. The proof of Theorem \ref{S} is achieved by computing the spectrum of $L_0$ and the point spectrum $L$.

\subsection{Spectrum of $L_0$}

The spectrum of $L_0$ is described by the following theorem.

\begin{theorem}
	\label{S0}
	The spectrum of the linear operator $L_0 : {\rm Dom}(L) \subset L^2(\mathbb{R}) \mapsto L^2(\mathbb{R})$ defined by (\ref{L0}) is given by 
	\begin{equation}\notag
%	\label{spectrum-L0}
	\sigma(L_0) = \left\{ \lambda \in\mathbb{C} : \;\; |\realpart{\lambda}|\leq  \left|\frac{5}{2}-b\right| \right\}.
	\end{equation}
	Moreover, the point spectrum 
	is located for $0 < |\realpart{\lambda}| <  \frac{5}{2}-b$ 
	if $b < \frac{5}{2}$ and the residual spectrum is located 
	for $0 < |\realpart{\lambda}| <  b- \frac{5}{2}$ 
	if $b > \frac{5}{2}$, whereas the continuous spectrum is located for 
	$\realpart{\lambda} = 0$ and $\realpart{\lambda} = \pm \left|\frac{5}{2}-b\right|$
	in both cases. 
\end{theorem}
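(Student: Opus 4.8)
The plan is to exploit the fact, noted after \eqref{Lop-domain}, that functions in $\mathrm{Dom}(L)$ need not be continuous across the peak, so that $L_0$ splits as an orthogonal direct sum $\mathcal{A}\oplus(-\mathcal{A})$ of two copies of a single operator on a bounded interval. Concretely, on $\xi>0$ the change of variable $t=1-e^{-\xi}\in(0,1)$ is an isometry of $L^2((0,\infty);d\xi)$ onto $L^2\big((0,1);\frac{dt}{1-t}\big)$ conjugating $L_0$ to $\mathcal{A}:=(1-t)\big[t\,\partial_t+(b-2)\big]$, while on $\xi<0$ the change of variable $t=1-e^{\xi}$ conjugates $L_0$ to $-\mathcal{A}$; hence $\sigma(L_0)=\sigma(\mathcal{A})\cup\big(-\sigma(\mathcal{A})\big)$, and likewise for each of the three spectral components, so everything reduces to computing $\sigma(\mathcal{A})$. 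The operator $\mathcal{A}$ has a regular singular point at $t=0$ (the peak) and a degeneration of the weight at $t=1$ (spatial infinity), and the homogeneous equation $(\mathcal{A}-\lambda)v=0$ is solved by an integrating factor as $v_\lambda(t)=t^{\lambda+2-b}(1-t)^{-\lambda}$, which lies in $L^2\big((0,1);\frac{dt}{1-t}\big)$ near $t=0$ iff $\realpart{\lambda}>b-\frac{5}{2}$ and near $t=1$ iff $\realpart{\lambda}<0$.

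The first step is the inclusion $\sigma(L_0)\subseteq\{|\realpart{\lambda}|\le|\frac{5}{2}-b|\}$, i.e. $\rho(\mathcal{A})\supseteq\{\realpart{\lambda}\notin[\min(0,b-\tfrac{5}{2}),\max(0,b-\tfrac{5}{2})]\}$, which together with the analogous statement for $-\mathcal{A}$ gives the claim. Given such a $\lambda$ I would solve $(\mathcal{A}-\lambda)v=f$ by variation of parameters against $v_\lambda$, choosing the endpoint of integration ($t=0$ or $t=1$) so that the particular solution is square integrable at the end where $v_\lambda$ itself is not admissible; the condition on $\realpart{\lambda}$ guarantees that exactly one endpoint is admissible and the other is not, so the Green's function is uniquely determined. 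I would then bound the resulting integral operator on $L^2\big((0,1);\frac{dt}{1-t}\big)$ by a Schur test with a power-type weight (equivalently a weighted Hardy inequality): the relevant kernel near $t=0$ has exponents summing to $-1$, and the test constant is controlled by $\big|\realpart{\lambda}-(b-\frac{5}{2})\big|^{-1}$ and $|\realpart{\lambda}|^{-1}$, both finite on this region and degenerating as $\realpart{\lambda}\to b-\frac{5}{2}$ or $\realpart{\lambda}\to 0$, which already signals that those boundary lines lie in $\sigma(\mathcal{A})$.

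The second step is the point and residual spectrum inside the strip. For $b<\frac{5}{2}$ one has $\sigma_{\rm p}(\mathcal{A})=\{b-\frac{5}{2}<\realpart{\lambda}<0\}$, since for such $\lambda$ the function $v_\lambda$ is in the domain with $\mathcal{A}v_\lambda=\lambda v_\lambda$; after the direct-sum reduction this yields $\sigma_{\rm p}(L_0)=\{0<|\realpart{\lambda}|<\frac{5}{2}-b\}$, and the corresponding eigenfunction, extended by $0$, indeed lies in $\mathrm{Dom}(L)$ because $(1-\phi)v'=(\lambda-(2-b)\phi')v\in L^2$ automatically. For $b>\frac{5}{2}$ the same admissibility count gives $\mathrm{Ker}(\mathcal{A}-\lambda)=\{0\}$ for every $\lambda$; to extract the residual spectrum I would compute $\mathcal{A}^*=-(1-t)\big[t\,\partial_t+(3-b)\big]$, i.e. $\mathcal{A}^*(b)=-\mathcal{A}(5-b)$, so that $\mathrm{Ran}(\mathcal{A}-\lambda)^\perp=\mathrm{Ker}\big(\mathcal{A}(5-b)+\bar\lambda\big)$ is nontrivial precisely for $0<\realpart{\lambda}<b-\frac{5}{2}$; together with the direct-sum reduction this gives $\sigma_{\rm r}(L_0)=\{0<|\realpart{\lambda}|<b-\frac{5}{2}\}$.

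The last step treats the three lines $\realpart{\lambda}=0$ and $\realpart{\lambda}=\pm|\frac{5}{2}-b|$. On each of these $v_\lambda$ (and its reflection) fails to be square integrable — asymptotically constant at infinity when $\realpart{\lambda}=0$, and carrying a borderline $|\xi|^{-1/2}$ singularity at the peak when $\realpart{\lambda}=\pm(\frac{5}{2}-b)$ — so $\mathrm{Ker}(L_0-\lambda)=\{0\}$; the same dichotomy applied to $\mathcal{A}^*(b)=-\mathcal{A}(5-b)$ gives $\mathrm{Ker}(L_0^*-\bar\lambda)=\{0\}$, hence $\mathrm{Ran}(L_0-\lambda)$ is dense. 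To see these $\lambda$ are nonetheless in $\sigma(L_0)$ with unbounded inverse, I would exhibit Weyl sequences: cut-offs of $e^{\lambda\xi}$ pushed to $\pm\infty$ for the line $\realpart{\lambda}=0$, and normalized versions of $v_\lambda$ (or of $v_{\lambda'}$ with $\realpart{\lambda'}$ slightly on the admissible side) concentrated near the peak for the lines $\realpart{\lambda}=\pm(\frac{5}{2}-b)$. This places all three lines in $\sigma_{\rm c}(L_0)$ and, with the first step, identifies $\sigma(L_0)$ with the closed strip. I expect the main obstacle to be the resolvent estimate of the first step: obtaining uniform $L^2$ bounds for the explicit Green's function of $\mathcal{A}$, simultaneously controlling the regular singular point at $t=0$ and the weight degeneration at $t=1$, and verifying that the operator so constructed is a genuine two-sided inverse of $\mathcal{A}-\lambda$ whose range is all of $L^2\big((0,1);\frac{dt}{1-t}\big)$ rather than merely a dense subspace.
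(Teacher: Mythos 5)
Your proposal is sound and reaches the same conclusions, but it reorganizes the argument in a genuinely different way in two of the four steps. The point-spectrum and residual-spectrum parts are essentially the paper's: explicit solution of the first-order ODE on each half-line with the two admissibility conditions (at the peak and at infinity), and the adjoint symmetry — your identity $\mathcal{A}^*(b)=-\mathcal{A}(5-b)$ is exactly the paper's transformation $\lambda\mapsto-\lambda$, $b-2\mapsto 3-b$ in disguise. What you do differently is (i) promote the half-line splitting, which the paper invokes only for the continuous spectrum via Dunford--Schwartz, to a global unitary reduction $L_0\cong\mathcal{A}\oplus(-\mathcal{A})$ on $L^2\bigl((0,1);\tfrac{dt}{1-t}\bigr)$, which cleanly packages all four pieces of the analysis around a single model operator; (ii) prove the resolvent bound by an explicit Green's function plus a Schur/weighted-Hardy test, where the paper instead uses the short energy identity $\bigl(b-\tfrac52\bigr)\langle\phi'v,v\rangle+\realpart{\lambda}\|v\|^2=-\realpart{\langle f,v\rangle}$ together with $|\langle\phi'v,v\rangle|\le\|v\|^2$; and (iii) handle the lines $\realpart{\lambda}=0$ and $\realpart{\lambda}=\pm|\tfrac52-b|$ by Weyl sequences plus density of the range from the trivial adjoint kernel, where the paper transforms to $z=\log(e^{\xi}-1)$ and reads off the continuous spectrum of $\partial_z$ plus exponentially decaying terms in weighted $L^2$ spaces. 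Your route is heavier at the resolvent step (the paper's energy estimate is three lines), but it buys something the paper leaves implicit: the constructed Green's function is a genuine two-sided inverse, so surjectivity of $L_0-\lambda$ comes for free, whereas the a priori bound alone only gives injectivity and a bounded inverse on the range; and your anticipated Schur constants, degenerating like $|\realpart{\lambda}-(b-\tfrac52)|^{-1}$ and $|\realpart{\lambda}|^{-1}$, are exactly right (the Hardy-type bound near $t=0$ has constant $\bigl(\realpart{\lambda}+\tfrac52-b\bigr)^{-1}$, and near $t=1$ the operator is $\partial_\xi-\lambda$ up to decaying terms with resolvent norm $\sim|\realpart{\lambda}|^{-1}$), so the step you flag as the main obstacle does go through. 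The only caveat, shared with the paper, is the tacit assumption that ${\rm Dom}(L)$ imposes no coupling at $\xi=0$ (the degenerate factor $1-\phi$ kills any singular contribution at the peak), which is what legitimizes the direct-sum decomposition for all parts of the spectrum and the extension of eigenfunctions by zero.
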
 

\begin{proof}
	Given simplicity of the definition of $L_0$, the proof is obtained by computing the point, residual, and continuous spectrum of $L_0$ explicitly. \\
	
	{\em Point spectrum of $L_0$.}  We solve the differential equation 
\beq
\label{L0E}
		(1-\phi) \frac{dv}{d\xi} + (2-b) \phi' v = \lambda v, \quad \xi \in \mathbb{R}.
\eeq
The differential equation (\ref{L0E}) is solved separately for $\xi > 0$ and $\xi < 0$ with the following general solution:
\beq
\label{Lv0S}
		v(\xi) =
		\left\{ 
		\begin{array}{ll}
			\displaystyle{v_+\frac{(e^\xi-1)^\lambda}{(1-e^{-\xi})^{b-2}}}, & \quad  \xi>0,\\
			\displaystyle{v_-\frac{1}{(e^{-\xi}-1)^\lambda{(1-e^{\xi})^{b-2}}}}, & \quad \xi<0, \end{array}\right.
\eeq
	where $v_-$ and $v_+$ are arbitrary constants. 
	
The differential equation (\ref{L0E}) has the following symmetry:	
if $\lambda=\lambda_0$ is an eigenvalue with the eigenfunction $v = v_0(\xi)$, then $\lambda=-\lambda_0$ is an eigenvalue with the eigenfunction $v = v_0(-\xi)$. Therefore, it is sufficient to consider the case of $\realpart{\lambda} \geq 0$.

Since $v(\xi) \sim v_+ e^{\lambda \xi}$ as $\xi \to +\infty$, then 
$v \in L^2(\mathbb{R})$ for $\realpart{\lambda} \geq 0$ is satisfied by the only choice of $v_+ = 0$. Since $v(\xi) \sim v_- e^{\lambda \xi}$ as $\xi \to -\infty$, then $v \in L^2(\mathbb{R})$ is satisfied for arbitrary $v_-$ 
if and only if $\realpart{\lambda} > 0$. Since $v(\xi) \sim v_- |\xi|^{2-b-\lambda}$ as $x \to 0^-$, then $v \in L^2(\mathbb{R})$ is satisfied for arbitrary $v_-$ if and only if $\realpart{\lambda}+b-2< \frac{1}{2}$, 
that is for $\realpart{\lambda} < \frac{5}{2} - b$. In all other cases, we have to set $v_- = 0$ for $v \in L^2(\mathbb{R})$ so that the zero function is the only solution of (\ref{L0E}) in $L^2(\mathbb{R})$.  
Summarizing and using the symmetry above, $\sigma_p(L_0)$ exists if $b < \frac{5}{2}$ and is located for $0 < |\realpart{\lambda}| <  \frac{5}{2}-b$. \\

{\em Residual spectrum of $L_0$.}  By Lemma 6.2.6 in \cite{Buhler18}, 
if $\sigma_p(L_0)$ is an empty set, then $\sigma_r(L_0) = \sigma_p(L_0^*)$, 
where 
\beq
\notag%\label{L0a}
L_0^* = - \partial_{\xi} (1-\phi) + (2-b) \phi' = -(1-\phi) \partial_{\xi} + (3-b) \phi'
\eeq 
is the adjoint operator to $L_0$ in $L^2(\mathbb{R})$. The differential equation 
\beq
\label{EPs}
-(1-\phi) \frac{dv}{d\xi} + (3-b) \phi' v = \lambda v, \quad \xi \in \mathbb{R}
\eeq
becomes (\ref{L0E}) after the transformation: $\lambda \mapsto -\lambda$ and 
$b-2 \mapsto 3-b$. Therefore, we obtain the following general solution
by applying this transformation to (\ref{Lv0S}):
\beq
\label{Lv0A}
v(\xi) =
\left\{ 
\begin{array}{ll}
	\displaystyle{v_+\frac{(e^\xi-1)^{-\lambda}}{(1-e^{-\xi})^{3-b}}}, & \quad  \xi>0,\\
	\displaystyle{v_-\frac{1}{(e^{-\xi}-1)^{-\lambda}{(1-e^{\xi})^{3-b}}}}, & \quad \xi<0, \end{array}\right.
\eeq
where $v_-$ and $v_+$ are arbitrary constants. Proceeding similarly with the limits $\xi \to \pm \infty$ and $\xi \to 0^{\pm}$ shows 
that nonzero solutions of (\ref{EPs}) exist in $L^2(\mathbb{R})$ 
if and only if $b > \frac{5}{2}$ and $0 < |\realpart{\lambda}| <  \frac{5}{2}-b$. This yields $\sigma_r(L_0)$ since $\sigma_p(L_0)$ is an empty set for $b > \frac{5}{2}$. \\

{\em Resolvent set of $L_0$.} Let us consider the resolvent equation
\beq
\label{L0RE}
		L_0v-\lambda v=f,
\eeq
where $f \in L^2(\mathbb{R})$ is arbitrary and $\realpart{\lambda} \geq 0$ is assumed without loss of generality. We multiply both sides of \eqref{L0RE} by $\bar{v}$ and integrate over $\mathbb{R}$. Using the definition of $L_0$ given in 
	\eqref{L0}, one finds
	\eq{
		\inner{\left((1-\phi)v\right)'}{v}+(3-b)\inner{\phi' v}{v}-\lambda\|v\|^2=\inner{f}{v}.
	}{in}
By integration by parts, since $\lim\limits_{\xi \to \pm \infty} v(\xi) = 0$ for $v \in {\rm Dom}(L)$, we 
	have that
	$$
	\inner{\left((1-\phi) v\right)'}{v}=-\inner{v}{\left((1-\phi) v\right)'}-\inner{\phi' v}{v},
	$$
	thus
\begin{equation}
\label{tech-eq}
	\realpart{\inner{((1-\phi)v)'}{v}}=-\frac{1}{2}\inner{\phi' v}{\bar{v}}.
\end{equation}
	Taking the real part of \eqref{in}, multiplying by -1, and using \eqref{tech-eq}, we get 
\beq
\label{inr}
		\left( b -\frac{5}{2}\right) \inner{\phi' v}{v}+\realpart{\lambda}\|v\|^2=-\realpart{\inner{f}{v}},
\eeq
where
\beq
\label{inu}
	-\|v\|^2 \leq \inner{\phi' v}{v}\leq \|v\|^2.
\eeq
Using the upper bound of (\ref{inu}) in \eqref{inr} in the case $b \leq \frac{5}{2}$, we find that 
\beq
\notag
%\label{inr1}
		\left(\realpart{\lambda}+b-\frac{5}{2}\right)\|v\|^2\leq\left|\realpart{\inner{f}{v}}\right| \leq \| f \| \| v \|,
\eeq
where the Cauchy--Schwarz inequality has been used. Hence, for every $\realpart{\lambda}> \frac{5}{2}-b$, there exists $C_{\lambda}$ such that 
$\| v \| \leq C_{\lambda} \|f \|$ so that this $\lambda$ belongs to $\rho(L_0)$.

In a very similar way, using the lower bound of (\ref{inu}) in \eqref{inr} 
in the case $b\geq \frac{5}{2}$, we find that 
\eqnn{
		\left(\realpart{\lambda}-b+\frac{5}{2}\right)\|v\|^2 \leq\left|\realpart{\inner{f}{v}}\right| \leq \| f \| \| v \|,
	}
hence $\realpart{\lambda} > b- \frac{5}{2}$ belongs to $\rho(L_0)$. \\
	
{\em Continuous spectrum of $L_0$.}  By Theorem 4 in \cite[p. 1438]{DS}, if $L_0$ is a differential operator on $\mathbb{R} = (-\infty,0) \cup (0,\infty)$, 
and $L_0^{\pm}$ are restrictions of $L_0$ on $(-\infty,0)$ and $(0,\infty)$, 
then $\sigma_c(L_0) = \sigma_c(L_0^+) \cup \sigma_c(L_0^-)$. Therefore, we can represent the resolvent equation (\ref{L0RE}) separately for $\xi > 0$ and $\xi < 0$.

For $\xi > 0$, we use the transformation $(0,\infty) \ni \xi \mapsto z \in \mathbb{R}$ by $z = \log(e^{\xi}-1)$ which follows from the method of characteristics. Then, $\mathfrak{v}(z) := v(\xi)$ satisfies the resolvent equation 
\beq
%\label{half-positive}
\notag\mathfrak{L}_0^+ \mathfrak{v} - \lambda \mathfrak{v} = \mathfrak{f},
\eeq 
where $\mathfrak{L}_0^+ := \partial_z + (b-2)(1+e^z)^{-1}$ and
$\mathfrak{f}(z) := f(\xi)$. Since
$$
\int_0^{\infty} v(\xi)^2 d\xi = \int_{-\infty}^{0} e^{z} \frac{\mathfrak{v}(z)^2 dz}{1 + e^{z}} + \int_{0}^{\infty} \frac{\mathfrak{v}(z)^2 dz}{1 + e^{-z}}, 
$$
$\sigma_c(L_0^+)$ is the union of $\sigma_c(\mathfrak{L}_0^+)$ in $L^2(\mathbb{R}_+)$ and $L^2(\mathbb{R}_-;e^z)$, where $e^z$ is the exponential weighted $L^2$ space. Since $\mathfrak{L}_0^+ = \partial_z + e^{-z} (b-2) (1+e^{-z})^{-1}$ and $e^{-z} (1+e^{-z})^{-1}$ is bounded and decaying exponentially to $0$ as $z \to +\infty$, $\sigma_c(\mathfrak{L}_0^+)$ in $L^2(\mathbb{R}_+)$ is located at $\realpart{\lambda} = 0$. Since 
$\mathfrak{L}_0^+ = \partial_z + (b-2) - (b-2) e^z (1+e^{z})^{-1}$ and $e^z (1+e^z)^{-1}$ is bounded and decaying exponentially to $0$ as $z \to -\infty$,
$\sigma_c(\mathfrak{L}_0^+)$  in $L^2(\mathbb{R}_-;e^z)$ is located at 
$\realpart{\lambda} = b-\frac{5}{2}$.

Similarly, for $\xi < 0$, we use the transformation $(-\infty,0) \ni \xi \mapsto z \in \mathbb{R}$ by $z = -\log(e^{-\xi}-1)$ and obtain the resolvent equation for $\mathfrak{v}(z) := v(\xi)$:
\beq
%\label{half-negative}
\notag
\mathfrak{L}_0^- \mathfrak{v} - \lambda \mathfrak{v} = \mathfrak{f},
\eeq 
where $\mathfrak{L}_0^- := \partial_z + (2-b)(1+e^{-z})^{-1}$ and
$\mathfrak{f}(z) := f(\xi)$. Since
$$
\int_{-\infty}^0 v(\xi)^2 d\xi = \int_{-\infty}^{0} \frac{\mathfrak{v}(z)^2 dz}{1 + e^{z}} + \int_{0}^{\infty} e^{-z} \frac{\mathfrak{v}(z)^2 dz}{1 + e^{-z}}, 
$$
$\sigma_c(L_0^-)$ is the union of $\sigma_c(\mathfrak{L}_0^-)$ in $L^2(\mathbb{R}_-)$ and 
$L^2(\mathbb{R}_+;e^{-z})$. Similar to the previous arguments, 
the continuous spectrum of $\mathfrak{L}_0^- = \partial_z + e^{z} (2-b) (1+e^{z})^{-1}$ in $L^2(\mathbb{R}_-)$ is located 
at $\realpart{\lambda} = 0$ and the continuous spectrum of 
$\mathfrak{L}_0^- = \partial_z + (2-b) - (2-b) e^{-z} (1+e^{-z})^{-1}$ in $L^2(\mathbb{R}_+;e^{-z})$ is located at 
$\realpart{\lambda} = \frac{5}{2}-b$. 

By Theorem 4 in \cite{DS}, $\sigma_c(L_0)$ is located for 
$\realpart{\lambda} = 0$ and $\realpart{\lambda} = \pm \left|\frac{5}{2}-b\right|$.
\end{proof}

\subsection{Point spectrum of $L$}

For the point spectrum of $L$, we consider the spectral problem
\beq
	Lv - \lambda v=0, \quad v \in {\rm Dom}(L) \subset L^2(\mathbb{R}).
\label{eig}
\eeq
By Lemma \ref{lem-projections}, $0$ is always a double eigenvalue associated with the two-dimensional invariant subspace $\{ \phi, \phi'\}$. If 
$b \neq 2$, the double zero eigenvalue is defective with only one linearly independent eigenfunction. For $b = 2$, the double zero eigenvalue is 
semi-simple. In what follows, we are looking for other solutions of 
the spectral problem (\ref{eig}). 

\begin{remark}
	The double zero eigenvalue of $L$ is related to the secondary decomposition $\tilde{v} = \alpha \phi + \beta \phi' + w \in Y$ in Lemma \ref{lem-secondary}. By Remark \ref{remark-constraint}, the decomposition is not unique in general. It is therefore unclear how to set up a constrained subspace of $L^2(\mathbb{R})$ so that the double zero eigenvalue of $L$ is eliminated by the constraints.
\end{remark}

The following lemma characterizes the point spectrum of $L$.
	
\begin{lemma}
\label{im}
In addition to the double zero eigenvalue, the linear operator  $L : {\rm Dom}(L) \subset L^2(\mathbb{R}) \mapsto L^2(\mathbb{R})$ defined by \eqref{Lop} admits the point spectrum for $0 < |\realpart{\lambda}| < \frac{5}{2}-b$ if $b < \frac{5}{2}$.
\end{lemma}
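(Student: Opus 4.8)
The aim is to produce, for every $\lambda$ with $0 < |\realpart{\lambda}| < \tfrac{5}{2} - b$ (recall $b < \tfrac{5}{2}$), a nonzero $v \in {\rm Dom}(L)$ with $Lv = \lambda v$; together with the double zero eigenvalue from Lemma~\ref{lem-projections} this proves the claim. Both $L_0$ and $Q$ anticommute with the reflection $R\colon v(\xi) \mapsto v(-\xi)$ — this is checked directly from \eqref{L0} and \eqref{Q-form2} using that $\phi$ is even and $\phi'$ is odd — and $R$ preserves ${\rm Dom}(L)$, so $\sigma_{\rm p}(L)$ is invariant under $\lambda \mapsto -\lambda$ and we may assume $\realpart{\lambda} > 0$. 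Rewrite the eigenvalue equation \eqref{eig} as the inhomogeneous transport equation $(L_0 - \lambda) v = - Q(v)$. For any $v \in L^2(\mathbb{R})$ the source $Q(v)$ is continuous, belongs to $H^1(\mathbb{R})$ with $\| Q(v) \|_{H^1} \le C \| v \|_{L^2}$, and decays exponentially, $|Q(v)(\xi)| \le C\, (1+|\xi|)^{1/2}\, e^{-|\xi|}\, \| v \|_{L^2}$; hence $Q$ is bounded, and in fact compact (by the Arzel\`a--Ascoli theorem together with the uniform decay), as a map from $L^2(\mathbb{R})$ into the weighted space $Z := \{ f \in C(\mathbb{R}) : \sup_{\xi} e^{|\xi|/2} |f(\xi)| < \infty \}$.

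Next I would construct a right inverse of $L_0 - \lambda$ compatible with ${\rm Dom}(L)$. Using the explicit homogeneous solutions in \eqref{Lv0S}, for $g \in Z$ define $\mathcal{G}[g]$ by variation of parameters on each half-line: on $(0,\infty)$ take the unique solution decaying at $+\infty$ (the homogeneous mode $\sim e^{\lambda \xi}$ is not square integrable there since $\realpart{\lambda} > 0$), and on $(-\infty,0)$ take the particular solution obtained with a fixed finite lower limit of integration (changing it modifies $\mathcal{G}[g]$ only by a multiple of the function $\psi_0$ below). A local analysis at $\xi = 0$ — distinguishing cases according to the sign of $\realpart{\lambda + 2 - b}$ and using that $g$ is bounded near $\xi = 0$, and observing that a possible jump of $\mathcal{G}[g]$ at $\xi=0$ contributes nothing because $1-\phi$ vanishes there — shows that $\mathcal{G}[g] \in L^2(\mathbb{R})$, that $(1-\phi)\,\mathcal{G}[g]' \in L^2(\mathbb{R})$, and that $(L_0 - \lambda)\mathcal{G}[g] = g$; thus $\mathcal{G}\colon Z \to {\rm Dom}(L)$ is bounded. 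The same computation shows that $\psi_0 := \psi_- \cdot \mathbf{1}_{(-\infty,0)}$, with $\psi_-$ the $\xi < 0$ branch of \eqref{Lv0S}, lies in ${\rm Dom}(L)$ and spans ${\rm Ker}(L_0 - \lambda)$ inside ${\rm Dom}(L)$, so that every solution in ${\rm Dom}(L)$ of $(L_0 - \lambda) v = g$ has the form $\mathcal{G}[g] + c\,\psi_0$ with $c \in \mathbb{C}$. It is exactly the bound $\realpart{\lambda} < \tfrac{5}{2} - b$ that keeps both $\psi_0$ and $\mathcal{G}[g]$ square integrable near $\xi = 0$.

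Consequently $Lv = \lambda v$ with $v \in {\rm Dom}(L)$ is equivalent to $(I + \mathcal{G}Q) v = c\,\psi_0$ for some $c \in \mathbb{C}$, where $\mathcal{G}Q\colon L^2(\mathbb{R}) \to L^2(\mathbb{R})$ is compact, being the composition of the compact map $Q\colon L^2(\mathbb{R}) \to Z$ with the bounded map $\mathcal{G}\colon Z \to L^2(\mathbb{R})$. Now the Fredholm alternative closes the argument: if $I + \mathcal{G}Q$ is not injective, set $c = 0$ and let $v$ be any nonzero element of ${\rm Ker}(I + \mathcal{G}Q)$; if $I + \mathcal{G}Q$ is injective then it is invertible (Fredholm of index zero), and we set $c = 1$ and $v = (I + \mathcal{G}Q)^{-1}\psi_0 \neq 0$. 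In either case $v \in {\rm Dom}(L)$ (from $v = -\mathcal{G}Qv$, respectively $v = \psi_0 - \mathcal{G}Qv$), and applying $L_0 - \lambda$ to the identity $(I + \mathcal{G}Q)v = c\,\psi_0$, using $(L_0 - \lambda)\psi_0 = 0$ and $(L_0 - \lambda)\mathcal{G}Qv = Qv$, gives $(L_0 - \lambda)v + Qv = 0$, i.e. $Lv = \lambda v$. Thus every $\lambda$ in the open strip is an eigenvalue of $L$.

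The delicate point, and the technical heart of the proof, is the construction and boundedness of $\mathcal{G}$ at the singular point $\xi = 0$, where $1 - \phi$ vanishes and the coefficient of $v'$ in the eigenvalue equation degenerates: one has to identify the precise power-law behavior of $\mathcal{G}[g]$ and of $\psi_0$ there, check its compatibility with both $L^2(\mathbb{R})$ and ${\rm Dom}(L)$, and see that $\realpart{\lambda} = \tfrac{5}{2} - b$ is exactly the admissible threshold. The exponential decay of $Q(v)$ at $\pm\infty$ is what makes the variation-of-parameters integrals converge and forces the source to be sought in $Z$ rather than merely in $L^2(\mathbb{R})$; this refinement is needed because $\lambda \in \sigma_{\rm p}(L_0)$, so the resolvent of $L_0$ is genuinely unbounded on $L^2(\mathbb{R})$.
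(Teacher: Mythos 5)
Your argument is correct, but it takes a genuinely different route from the paper. The paper localizes the nonlocal spectral problem \eqref{eig} by applying $1-\partial_\xi^2$ on each half-line, reduces it to an explicitly solvable first-order equation for $m=v-v''$, and then recovers $v$ and tests square-integrability near the degenerate point $\xi=0$ by a Frobenius analysis of a second-order equation with a regular singular point; this yields the precise eigenfunction asymptotics $v(\xi)\sim|\xi|^{2-b-\lambda}$ (and closed-form eigenfunctions for $b=2$), with the threshold $\realpart{\lambda}<\tfrac52-b$ read off from the local exponent. You instead keep the problem nonlocal and treat $Q$ as a compact perturbation of the transport operator $L_0$: you solve $(L_0-\lambda)v=g$ by variation of parameters, identify ${\rm Ker}(L_0-\lambda)\cap{\rm Dom}(L)={\rm span}(\psi_0)$, and then run the Fredholm alternative for $I+\mathcal{G}Q$, obtaining an eigenfunction of $L$ whether or not $I+\mathcal{G}Q$ is injective --- a nice dichotomy that avoids computing the eigenfunction at all, and which parallels, at the level of the point spectrum, the paper's use of the compact-perturbation theorem of \cite{GP2} for the full spectrum. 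The trade-off is where the analytic work sits: your burden is the boundedness of $\mathcal{G}$ from the exponentially weighted space $Z$ into ${\rm Dom}(L)$ through the degeneracy at $\xi=0$ (and the compactness of $Q:L^2(\mathbb{R})\to Z$, which indeed follows from the kernel bounds as in Lemma~\ref{lem-compact} plus equicontinuity), which you only sketch but with the correct case distinctions; the same threshold $\realpart{\lambda}<\tfrac52-b$ enters through the square-integrability of $\psi_0$ and of $\mathcal{G}[g]$ near $\xi=0$, exactly where the paper's Frobenius exponent enters. Your treatment of possible jumps or weak singularities of elements of ${\rm Dom}(L)$ at $\xi=0$ (killed by the vanishing of $1-\phi$) follows the same convention the paper uses for $\phi'\in{\rm Dom}(L)$, so it is consistent; the only information your route does not deliver, and the paper's does, is the explicit local behavior of the eigenfunctions.
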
 

\begin{remark}
Some ideas of the proof of Lemma \ref{im} are similar to what was done recently in \cite{Char1}. However, the space of functions was different and the direct linearization of the $b$-CH equation was used in \cite{Char1} without appealing to the decomposition (\ref{decomp}). In additioin, we find all the solutions for the spectral problem (\ref{eig}) explicitly.
\end{remark}

\begin{proof}
The spectral problem (\ref{eig}) has the same symmetry 
as the differential equation (\ref{L0E}): if $\lambda=\lambda_0$ is an eigenvalue with the eigenfunction $v = v_0(\xi)$, then $\lambda=-\lambda_0$ is an eigenvalue with the eigenfunction $v = v_0(-\xi)$. Therefore, it is sufficient to consider the case $\realpart{\lambda} \geq 0$.

Applying the operator $1-\partial_\xi^2$ to (\ref{eig}) separately for $\xi < 0$ and $\xi> 0$ yields the following differential equation
\beq 
\label{left}
\lambda (v-v'') = (1-\phi)(v'-v''') - b \phi' (v-v'').
\eeq 
Indeed, for $\xi < 0$, we compute from the representation (\ref{Q-form2})
\begin{eqnarray*}
(1-\partial_{\xi}^2) \phi \ast (\phi' v) =
(1-\partial_{\xi}^2) \left[ \int_{-\infty}^{\xi} e^{-\xi+2\eta} v(\eta) d\eta + \int_{\xi}^0 e^{\xi} v(\eta) d\eta + \int_0^{\infty} e^{\xi - 2 \eta} v(\eta) d \eta \right] = 2 e^{\xi} v 
\end{eqnarray*}
and 
\begin{eqnarray*}
	(1-\partial_{\xi}^2) \phi v_{-1} =
	(1-\partial_{\xi}^2) e^{\xi} \int_{0}^{\xi} v(\eta) d\eta = -2 e^{\xi} v -e^{\xi} v',
\end{eqnarray*}
which yields (\ref{left}) for $\xi < 0$ after adding all terms. Computations for $\xi > 0$ are similar. 

Let $m := v - v''$. The differential equation (\ref{left}) becomes the first-order equation
\beq 
\label{right}
(1-\phi) \frac{dm}{d\xi}  - b \phi' m = \lambda m,
\eeq 
which coincides with the differential equation (\ref{L0E}) after replacing $b$ by $b-2$. Therefore, the exact solution is given by the corresponding transformation of the solution (\ref{Lv0S}), namely
\beq
\label{exact-sol}
m(\xi) =
\left\{ 
\begin{array}{ll}
	m_+ e^{\lambda \xi} (1-e^{-\xi})^{\lambda - b}, & \quad  \xi>0,\\
	m_- e^{\lambda \xi} (1-e^{\xi})^{-\lambda -b}, & \quad \xi<0, \end{array}\right.
\eeq
where $m_-$ and $m_+$ are arbitrary constants. The eigenfunction $v$ is recovered from solutions of the second-order equation 
\beq
\label{exact-eq-v}
v(\xi) - v''(\xi) = 
\left\{ 
\begin{array}{ll}
	m_+ e^{\lambda \xi} (1-e^{-\xi})^{\lambda - b}, & \quad  \xi>0,\\
	m_- e^{\lambda \xi} (1-e^{\xi})^{-\lambda -b}, & \quad \xi<0. \end{array}\right.
\eeq
If $v \in L^2(\mathbb{R})$, then $m \in H^{-2}(\mathbb{R})$, the dual space of $H^2(\mathbb{R})$. Since $m(\xi) \sim m_+ e^{\lambda \xi}$ as $\xi \to +\infty$, 
then $v \in L^2(\mathbb{R})$ for $\realpart{\lambda} \geq 0$ exists with 
the only choice $m_+ = 0$ and $v(\xi) = c_+ e^{-\xi}$, where $c_+$ is arbitrary. Similarly, $m(\xi) \sim m_- e^{\lambda \xi}$ as $\xi \to -\infty$. 
If $\realpart{\lambda} = 0$, then $v \in L^2(\mathbb{R})$ exists with 
the only choice $m_- = 0$ and $v(\xi) = c_- e^{\xi}$, where $c_-$ is arbitrary. In this case, $v = c_1 \phi + c_2 \phi'$ with $c_1 \pm c_2 = c_{\mp}$, which corresponds to the double eigenvalue $\lambda = 0$ in $\sigma_p(L)$. 

It remains to consider the case $m_+ = 0$, $m_- \neq 0$, and $\realpart{\lambda} > 0$. With the normalization $m_- = 1$, the solution of (\ref{exact-eq-v}) for $\xi < 0$ can be written in the form 
\beq
\label{exact-sol-v}
v(\xi) = e^{\lambda \xi} f(\xi) (1-e^{\xi})^{2-\lambda-b}, \quad \xi < 0,
\eeq
where $f(\xi)$ satisfies the second-order differential equation
\begin{eqnarray}
\nonumber
&& (1-e^{\xi})^2 (f'' + 2(2-b) f' + (b-1)(b-3) f) \\
&& + (\lambda + b - 2)  (1-e^{\xi}) (2f' + (3-2b) f) 
+ (\lambda + b - 2)(\lambda + b -1) f = -1.
\label{exact-sol-F}
\end{eqnarray}
The homogeneous part of the second-order (\ref{exact-sol-F}) with the regular singular point $\xi = 0$ is associated with the indicial equation 
$$
\sigma^2 + (2 \lambda + 2 b - 5) \sigma + (\lambda + b - 2) (\lambda+ b -1) = 0
$$
for power solutions $f(\xi) \sim \xi^{\sigma}$. If $\lambda + b \neq \{1,2\}$, then $0$ is not among the roots of the indicial equation, whereas if $\lambda + b = \{1,2\}$, then $0$ is a simple root of the indicial equation. 
By the Frobenius theory (see, e.g., Chapter 4 in \cite{Teschl}), there exists a particular solution to the differential equation (\ref{exact-sol-F}) with the following behavior near the regular singular point $\xi = 0$:
\beq
f_p(\xi) \sim \left\{ \begin{array}{ll} 1 + \mathcal{O}(|\xi|), & \quad \lambda + b \neq \{1,2\}, \\
		\log|\xi| + \mathcal{O}(|\xi| \log|\xi|), & \quad \lambda + b = \{1,2\},
\end{array}
\right.
\quad \mbox{\rm as} \quad 
\xi \to 0^-,
\eeq
which yields the corresponding behavior of {{$v_p(\xi)$}} from (\ref{exact-sol-v}):
\beq
{{v_p}}(\xi) \sim \left\{ \begin{array}{ll} |\xi|^{2-b-\lambda}, & \quad \lambda + b \neq \{1,2\}, \\
	\log|\xi|, & \quad \lambda + b = 2, \\
	|\xi| \log|\xi|, & \quad \lambda + b = 1, 
\end{array}
\right.
\quad \mbox{\rm as} \quad 
\xi \to 0^-.
\eeq
Hence, {{\eqref{exact-eq-v} has a solution}} $v \in L^2(\mathbb{R})$ if and only if $\realpart{\lambda}+b-2< \frac{1}{2}$, for $0 < \realpart{\lambda} < \frac{5}{2} - b$. 

Summarizing and using the symmetry above, $\sigma_p(L)$ exists if $b < \frac{5}{2}$ and is located for $0 < |\realpart{\lambda}| <  \frac{5}{2}-b$.
\end{proof}

\begin{remark}
	In the case of Camassa--Holm equation $(b=2)$, there exists the exact solution of the differential equation (\ref{exact-eq-v}) with $\lambda \neq \{-1,0,1\}$ in the form 
	\begin{equation}
	v(\xi) = \frac{1}{\lambda (1-\lambda^2)} \left\{ 
	\begin{array}{ll} 
		\displaystyle	m_+ (\lambda + e^{-\xi}) (e^{\xi}-1)^{\lambda}, & \quad \xi > 0, \\
	\displaystyle	m_- (\lambda - e^{\xi}) (e^{-\xi}-1)^{-\lambda}, & \quad \xi < 0.
	\end{array} \right.
	\end{equation}
	It corresponds to the exact solution 
	$$
	f(\xi) = \frac{(\lambda - e^{\xi})}{\lambda (1 - \lambda^2)}, \quad \xi < 0,
	$$
	of equation (\ref{exact-sol-F}) for $b = 2$.
	If $m_- \neq 0$, then $v(\xi) \sim e^{\lambda \xi}$ as $\xi \to -\infty$ and $v(\xi) \sim |\xi|^{-\lambda}$ as $\xi \to 0^-$  so that $v \in L^2(\mathbb{R})$ if $0 < \realpart{\lambda} < \frac{1}{2}$.
\end{remark}

\subsection{Proof of Theorem \ref{S}}

We check assumptions of Theorem 1 in \cite{GP2} that the intersections $\sigma_p(L_0) \cap \rho(L)$ and $\sigma_p(L) \cap \rho(L_0)$ are empty. 

For $b < \frac{5}{2}$, $\sigma_p(L_0)$ in Theorem \ref{S0} consists of 
the bands $0 < |\realpart{\lambda}| < \frac{5}{2}-b$, whereas 
$\sigma_p(L)$ in Lemma \ref{im} consists of the same bands and an additional double zero 
eigenvalue. Since the resolvent set $\rho(L_0)$ in Theorem \ref{S0} 
consiste of the bands $|\realpart{\lambda}| > \frac{5}{2}-b$, 
$\sigma_p(L) \cap \rho(L_0)$ is an empty set. Since $\sigma_p(L_0) \subset \sigma_p(L)$,  $\sigma_p(L_0) \cap \rho(L)$ is also an empty set.

For $b \geq \frac{5}{2}$, $\sigma_p(L_0)$ in Theorem \ref{S0} is an empty set, 
hence $\sigma_p(L_0) \cap \rho(L)$ is an empty set, whereas 
$\sigma_p(L) = \{0\}$ does not belong to the resolvent set of $L_0$ 
in the bands  $|\realpart{\lambda}| > b -\frac{5}{2}$, hence 
$\sigma_p(L) \cap \rho(L_0)$ is also an empty set. 

Since $Q$ is a compact operator in $L^2(\mathbb{R})$ by Lemma \ref{lem-compact}, 
Theorem 1 in \cite{GP2} suggests that $\sigma(L) = \sigma(L_0)$. The proof of Theorem \ref{S} repeats the statement of Theorem \ref{S0} with additional 
information about the double zero eigenvalue. 

\section{Time evolution of the linearized system}
\label{sec-time}

Due to the difference in the spectral properties of the linearized operator $L$ in $L^2(\mathbb{R})$ for $b < \frac{5}{2}$ and $b > \frac{5}{2}$ in Theorem \ref{S}, the growth of the $L^2$ norm of the peaked perturbations is different due to the point spectrum for $b < \frac{5}{2}$ and due to the residual spectrum for $b > \frac{5}{2}$, as follows from the balance equation (\ref{balance-eq}). 
We illustrate this difference here by studying exact solutions to the initial-value problem 
\beq
\label{ivp}
\left\{ \begin{array}{ll} 
	v_t = (1-\phi) v_{\xi} + (2-b) \phi' v, & \quad t > 0, \\
	v |_{t=0} = v_0, & \quad
	\end{array} 
\right.
\eeq
where $v_0 \in {\rm Dom}(L)$.
The initial-value problem coincides with the linearized equation 
for the truncated operator $L_0$. By Theorem \ref{S0}, the unstable spectrum of the operator $L_0$ is the point spectrum for $b < \frac{5}{2}$ and the residual spectrum for $b > \frac{5}{2}$. 

The initial-value problem (\ref{ivp}) can be solved exactly by using the method of characteristics also used in \cite{Natali,MP-2021,GP1}. The following lemma gives the bounds obtained from the exact solution of the initial-value problem (\ref{ivp}).

\begin{lemma}
	\label{lem-final}
	For every $v_0 \in {\rm Dom}(L)$, the initial-value problem (\ref{ivp}) admits the unique solution $v \in C(\mathbb{R},{\rm Dom}(L))$ satisfying the following properties:\\
	\begin{itemize}
		\item If $b = \frac{5}{2}$, then $\| v(t,\cdot) \|_{L^2} = \| v_0 \|_{L^2}$. \\
		\item If $b > \frac{5}{2}$, then $\| v(t,\cdot) \|_{L^2(-\infty,0)} \leq \| v_0 \|_{L^2(-\infty,0)}$ and there is a positive constant $C_0$ that depends on $v_0$ such that $\|  v(t,\cdot) \|_{L^2(0,\infty)} \geq C_0 e^{(b-\frac{5}{2})t}$. \\
		\item If $b < \frac{5}{2}$, then $\| v(t,\cdot) \|_{L^2(0,\infty)} \leq \| v_0 \|_{L^2(0,\infty)}$ and there exists $v_0 \in {\rm Dom}(L)$ such that $\| v(t,\cdot) \|_{L^2(-\infty,0)} = e^{\lambda_0 t} \| v_0 \|_{L^2(-\infty,0)}$ with $\lambda_0 \in (0,\frac{5}{2}-b)$. 
	\end{itemize}
\end{lemma}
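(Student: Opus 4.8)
The plan is to integrate the initial-value problem (\ref{ivp}) in closed form by the method of characteristics and then to read off the three regimes from the explicit solution. The key structural fact is that $1-\phi(\xi)=1-e^{-|\xi|}$ vanishes (to first order) only at $\xi=0$, so that $\xi=0$ is a fixed point of the characteristic flow $\dot\xi=-(1-\phi(\xi))$, the half-lines $(0,\infty)$ and $(-\infty,0)$ are invariant, and (\ref{ivp}) decouples over them. On $(0,\infty)$ I would use the substitution $z=\log(e^\xi-1)$ already employed in the proof of Theorem \ref{S0}, under which $(1-\phi)\partial_\xi=\partial_z$, $(2-b)\phi'=(b-2)(1+e^z)^{-1}$ and $d\xi=\frac{e^z}{1+e^z}\,dz$; writing $\mathfrak v(t,z):=v(t,\xi)$, equation (\ref{ivp}) becomes the unit-speed transport equation $\mathfrak v_t-\partial_z\mathfrak v=(b-2)(1+e^z)^{-1}\mathfrak v$. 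Integrating the resulting scalar linear ODE along the characteristics $z(t)=z_0-t$ gives
\[
\mathfrak v(t,z_0-t)=\mathfrak v_0(z_0)\left(\frac{e^t+e^{z_0}}{1+e^{z_0}}\right)^{b-2},
\]
and the analogous computation on $(-\infty,0)$, with $z=-\log(e^{-\xi}-1)$, yields a formula of the same type. These formulas furnish a candidate solution that is global in $t$; uniqueness holds because the difference of two solutions satisfies, along each characteristic, a homogeneous scalar linear ODE with zero initial datum.

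Next I would transport the $L^2$ norm through these changes of variables. Parametrizing $\int_0^\infty|v(t,\xi)|^2\,d\xi=\int_{\mathbb{R}}|\mathfrak v(t,z)|^2\,\frac{dz}{1+e^{-z}}$ by the initial position $z_0$ and using the elementary identity $\frac{1+e^{-z_0}}{1+e^{t-z_0}}=\frac{1+e^{z_0}}{e^t+e^{z_0}}$ produces the clean formula
\[
\|v(t,\cdot)\|_{L^2(0,\infty)}^2=\int_{\mathbb{R}}|\mathfrak v_0(z)|^2\,\frac{e^z}{1+e^z}\,A(t,z)^{2b-5}\,dz,\qquad A(t,z):=\frac{e^t+e^z}{1+e^z},
\]
and the parallel computation on $(-\infty,0)$ gives $\|v(t,\cdot)\|_{L^2(-\infty,0)}^2=\int_{\mathbb{R}}|\mathfrak v_0(z)|^2\,\frac{1}{1+e^z}\,\big(e^t/A(t,z)\big)^{5-2b}\,dz$. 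The observation that drives everything is that $1\le A(t,z)\le e^t$ for all $z\in\mathbb{R}$ and $t\ge0$ (both bounds reduce to $e^t\ge1$), hence also $1\le e^t/A(t,z)\le e^t$. The three regimes now follow at once. If $b=\frac{5}{2}$ the exponents $2b-5$ and $5-2b$ vanish, both weight factors are identically $1$, the $L^2$ norm on each half-line is conserved, and $\|v(t,\cdot)\|_{L^2}=\|v_0\|_{L^2}$. If $b>\frac{5}{2}$ then $5-2b<0$, so $(e^t/A)^{5-2b}\le1$ and $\|v(t,\cdot)\|_{L^2(-\infty,0)}\le\|v_0\|_{L^2(-\infty,0)}$; and if $b<\frac{5}{2}$ then $2b-5<0$, so $A^{2b-5}\le1$ and $\|v(t,\cdot)\|_{L^2(0,\infty)}\le\|v_0\|_{L^2(0,\infty)}$.

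For the exponential lower bound when $b>\frac{5}{2}$ (where $\sigma_p(L_0)$ is empty, so no genuine eigenfunction is available and the growth is ``residual''), I would localize: assuming $v_0$ is not identically zero on $(0,\infty)$, choose a bounded interval $I\subset\mathbb{R}$ in the $z$-variable on which $\mathfrak v_0$ is not identically zero and put $M:=\sup_{z\in I}(1+e^z)<\infty$. Then $A(t,z)\ge\frac{e^t}{M}$ for $z\in I$, so, since $2b-5>0$,
\[
\|v(t,\cdot)\|_{L^2(0,\infty)}^2\ \ge\ \Big(\tfrac{e^t}{M}\Big)^{2b-5}\int_I|\mathfrak v_0(z)|^2\,\frac{e^z}{1+e^z}\,dz\ =:\ C_0^2\,e^{(2b-5)t},\qquad C_0>0,
\]
which gives $\|v(t,\cdot)\|_{L^2(0,\infty)}\ge C_0\,e^{(b-\frac{5}{2})t}$. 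For the exact equality in the case $b<\frac{5}{2}$, I would take $v_0$ to be the genuine eigenfunction of $L_0$ associated with a real eigenvalue $\lambda_0\in(0,\frac{5}{2}-b)$ constructed in the proof of Theorem \ref{S0}; it is supported on $\xi<0$ and lies in ${\rm Dom}(L)$, so $v(t,\xi)=e^{\lambda_0 t}v_0(\xi)$ is the unique solution of (\ref{ivp}), and hence $\|v(t,\cdot)\|_{L^2(-\infty,0)}=e^{\lambda_0 t}\|v_0\|_{L^2(-\infty,0)}$ exactly while $\|v(t,\cdot)\|_{L^2(0,\infty)}=0$.

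The step I expect to need the most care is verifying that the closed-form $v(t,\cdot)$ actually lies in ${\rm Dom}(L)$ and that $t\mapsto v(t,\cdot)$ is continuous into ${\rm Dom}(L)$, so that $v\in C(\mathbb{R},{\rm Dom}(L))$. That $v(t,\cdot)\in L^2(\mathbb{R})$ is already contained in the norm identities above (one reads off $\|v(t,\cdot)\|_{L^2}^2\le e^{|2b-5|t}\|v_0\|_{L^2}^2$ for $t\ge0$). For the remaining condition $(1-\phi)v_\xi\in L^2$ one uses the equation itself, $(1-\phi)v_\xi=v_t-(2-b)\phi'v$, and estimates the right-hand side in $L^2$ — either directly from the explicit formula, by differentiating it and controlling the behaviour as $\xi\to0^\pm$, where the transport speed $1-\phi$ degenerates, or, more cleanly, by recording that $L_0$ with domain ${\rm Dom}(L)$ is the infinitesimal generator of the semigroup realized by the solution formula, which makes the domain membership and the required continuity automatic. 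Continuity of $t\mapsto v(t,\cdot)$ then follows from the formula by dominated convergence.
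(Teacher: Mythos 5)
Your proposal is correct and follows essentially the same route as the paper: solve the initial-value problem explicitly by the method of characteristics, express $\|v(t,\cdot)\|_{L^2}^2$ on each half-line as a weighted integral of $|v_0|^2$, and read off conservation for $b=\tfrac{5}{2}$, boundedness versus exponential growth for $b\gtrless\tfrac{5}{2}$ from the pointwise bounds on the weight, with the exact exponential growth for $b<\tfrac{5}{2}$ coming from the explicit eigenfunction of $L_0$. The only differences are cosmetic — you work in the log-transformed variable $z$ rather than labeling characteristics by their initial position $s$, and for $b<\tfrac{5}{2}$ you invoke the eigenfunction together with uniqueness instead of re-deriving $V(t,s)=e^{\lambda_0 t}v_0(X(t,s))$ from the characteristic formula, as the paper does.
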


\begin{proof}
By using the method of characteristics, we introduce 
the characteristic curves $\xi = X(t,s)$ satisfying $X_t = \phi(X) - 1$ 
with $X |_{t=0} = s$. The exact solution for $X(t,s)$ is readily available 
(see, e.g. \cite{Natali}): 
\beq
X(t,s) = \left\{ \begin{array}{ll} 
	\log\left[ 1 + (e^s-1) e^{-t} \right], & \quad s > 0, \\
	-\log\left[ 1 + (e^{-s}-1) e^{t} \right], & \quad s < 0.
\end{array} \right.
\eeq
Along the characteristic curves, the function $V(t,s) := v(t,\xi = X(t,s))$ 
satisfies 
\beq
\left\{ \begin{array}{ll}
V_t = (2-b) \phi'(X(t,s)) V, & \quad t > 0, \\
V|_{t=0} = v_0(s), 
\end{array} \right.
\eeq
which is uniquely solved separately for $s > 0$ and $s < 0$ by 
\beq
V(t,s) = v_0(s) \left[ \frac{\partial X}{\partial s} \right]^{2-b} = 
\left\{ \begin{array}{ll} 
	v_0(s) \left[ 1 + (e^t-1) e^{-s} \right]^{b-2}, & \quad s > 0, \\
	v_0(s) \left[ 1 + (e^{-t}-1) e^{s} \right]^{b-2}, & \quad s < 0.
\end{array} \right.
\eeq
By the chain rule, we obtain 
\beq
\label{solution-right}
\| v(t,\cdot) \|^2_{L^2(0,\infty)} = \int_0^{\infty} |v_0(s)|^2 \left[ 1 + (e^t-1) e^{-s} \right]^{2b-5} ds
\eeq
and 
\beq
\label{solution-left}
\| v(t,\cdot) \|^2_{L^2(-\infty,0)} = \int_{-\infty}^0 |v_0(s)|^2 \left[ 1 + (e^{-t}-1) e^{s} \right]^{2b-5} ds.
\eeq

If $b = \frac{5}{2}$, the linear evolution is $L^2$-preserving in time both for $s > 0$ and $s < 0$.

If $b > \frac{5}{2}$, then it follows from (\ref{solution-right}) and (\ref{solution-left}) that
$$
\| v(t,\cdot) \|^2_{L^2(0,\infty)} \geq (e^t - 1)^{2b-5} \int_0^{\infty} |v_0(s)|^2 e^{-(2b-5)s} ds, \quad 
\| v(t,\cdot) \|^2_{L^2(-\infty,0)} \leq \int_{-\infty}^0 |v_0(s)|^2 ds.
$$
This proves that the $L^2$ norm of the perturbation grows exponentially in time  on the right side of the peak at $\xi = 0$, while that on the left side of the peak  stays bounded in time. Moreover, the growth rate of the $L^2$ norm is exactly $e^{(b-\frac{5}{2})t}$.

If $b < \frac{5}{2}$, then it follows from (\ref{solution-right}) and (\ref{solution-left}) that
$$
\| v(t,\cdot) \|^2_{L^2(0,\infty)} \leq \int_0^{\infty} |v_0(s)|^2 ds, 
$$
so that the $L^2$ norm of the perturbation  stays bounded in time on the right side of the peak at $\xi = 0$. It may first look like the $L^2$ norm 
on the left side of the peak stays bounded too with the formal limit
\beq
\label{limit-integral}
\lim_{t \to +\infty} \| v(t,\cdot) \|^2_{L^2(-\infty,0)} = \int_{-\infty}^0 |v_0(s)|^2 (1-e^s)^{2b-5} ds.
\eeq
However, the limit may not exist if $v_0(0) \neq 0$. If $v_0$ is the eigenfunction of the point spectrum of $L_0$ for the eigenvalue $\lambda_0 \in (0,\frac{5}{2}-b)$, which follows from (\ref{Lv0S}), that is, 
\beq
\label{eigenfunction}
v_0(s) =\frac{e^{\lambda_0 s}}{(1-e^{s})^{\lambda_0 + b-2}}, \quad s < 0,
\eeq
then after elementary transformations, we obtain for $s < 0$
$$
V(t,s) = v_0(s) \left[ 1 + (e^{-t}-1) e^{s} \right]^{b-2} = 
\frac{e^{\lambda_0 t + \lambda_0 X(t,s)}}{(1-e^{X(t,s)})^{\lambda_0 + b-2}} = v_0(X(t,s)) e^{\lambda_0 t},
$$
so that $\| v(t,\cdot) \|_{L^2(-\infty,0)} = \| v_0 \|_{L^2(-\infty,0)} e^{\lambda_0 t}$ grows exponentially in time. Due to the weak singularity of $v_0(s)$ as $s \to 0^-$, which is allowed in ${\rm Dom}(L) \subset L^2(\mathbb{R})$, the integral of $|v_0(s)|^2 (1-e^s)^{2b-5}$ in (\ref{limit-integral}) diverges. 
\end{proof}

\begin{remark}
	The results of Lemma \ref{lem-final} are clearly related to the spectral properties of $L_0$ in Theorem \ref{S0}. In more details, we point the following:
	\begin{itemize}
		\item If $b = \frac{5}{2}$, the preservation of the $L^2$ norm of the solution $v \in C(\mathbb{R},{\rm Dom}(L))$ is related to $\sigma(L_0) = \{ i \mathbb{R}\}$.
	\item If $b > \frac{5}{2}$, the growth rate $e^{(b-\frac{5}{2})t}$ of the $L^2$ norm of the solution $v \in C(\mathbb{R},{\rm Dom}(L))$  agrees with the width of the unstable strip at $0 < |\realpart{\lambda}| \leq b-\frac{5}{2}$.
		\item If $b < \frac{5}{2}$, the instability is obtained from the eigenfunction (\ref{eigenfunction}) corresponding to the eigenvalue of the point spectrum in the unstable strip at $0 < |\realpart{\lambda}| \leq \frac{5}{2}-b$.  
		If the initial condition satisfies $v_0(0) = 0$ and the integral of $|v_0(s)|^2 (1-e^s)^{2b-5}$ in (\ref{limit-integral}) converges, the $L^2$ norm of the solution $v \in C(\mathbb{R},{\rm Dom}(L))$ is bounded for all times.
		\end{itemize}
\end{remark}

\begin{remark}
	Exact solutions for the initial-value problem associated with the full linearized operator $L$ can be obtained by the method of characteristics if $b = 2$ \cite{Natali}. The growth of perturbations in $H^1$ norm was obtained in \cite{Natali} by explicit computations. Since perturbations were considered in $H^1(\mathbb{R}) \cap W^{1,\infty}(\mathbb{R})$ with the additional condition $v_0(0) = 0$, the $L^2$ norm of such perturbations does not grow in time and the eigenfunctions (\ref{eigenfunction}) are excluded from the choice of the initial condition.
\end{remark}

\appendix

\section{Antisymmetric solution $v_b$ to $L^*v=0$ in the case $b > 3$}
\label{AppA}

We consider the equation
$$
L^*v=0
$$
where $L^*$ is defined in \eqref{Ls}. We make the substitution $v=w-w_{\xi\xi}$ and assume that $w$ and $w_{\xi\xi}$ are bounded and continuous functions. By using integration by parts to the last two terms in \eqref{Ls} applied to $w_{\xi\xi}$, we obtain the following differential equation
$$
 (1-\phi) w_{\xi\xi\xi} + (b-3) \phi' w_{\xi\xi} + (2(b-1)\phi-1)w_\xi=0.
$$
There exists two particular solutions of this differential equation:
$$
w_{\rm sym} = (1-\phi)^{b-1}, \quad 
w_{\rm anti} = {\rm sgn} (1-\phi)^{b-1},
$$
where ${\rm sgn}(\xi) = 1$ if $\xi > 0$ and ${\rm sgn}(\xi) = -1$ if $\xi < 0$. 
The assumption that $w$ and $w_{\xi\xi}$ are bounded and continuous functions is satisfied only if $b > 3$. 
Applying $(1-\partial_{\xi}^2)$ to $w_{\rm anti}$, we  
compute the antisymmetric solution $v_b$ to $L^*v = 0$:
$$
 v_b = {\rm sgn} (1-\phi)^{b-3}\left(b(b-2)\phi^2+(3-b)\phi-1\right).
$$
The solution $v_b$ is bounded and continuous at $\xi = 0$ if $b > 3$. There exists also 
the symmetric solution to $L^* v = 0$ obtained by applying $(1-\partial_{\xi}^2)$ to $w_{\rm sym}$. 
The solution is not used because $L^* v = 0$ is also satisfied by the symmetric, bounded, and continuous function $v = 1$.

\bibliographystyle{unsrt}

\begin{thebibliography}{99} 
	
\bibitem{Buhler18} T.~B\"uhler and D.~A.~Salamon, {\sl{Functional analysis}}, Graduate
Studies in Mathematics, {\bf{191}}, American Mathematical Society,
Providence, RI, 2018.

\bibitem{Cam} R. Camassa, D. Holm, ``An integrable shallow water equation with peaked solitons", Phys. Rev. Lett. {\bf 71} (1993), 1661--1664.

\bibitem{Cam2} R. Camassa, D. Holm and J. Hyman, ``A new integrable shallow water equation", Adv. Appl. Mech. {\bf 31} (1994), 1--33.

\bibitem{Char1} E. G. Charalampidis, R. Parker, P. G. Kevrekidis, and S. Lafortune, ``The stability of the $b$-family of peakon equations'', arXiv:2012.13019 (2020).

\bibitem{Const4} A. Constantin and D. Lannes, ``The hydrodynamical relevance of the Camassa–Holm and Degasperis–Procesi equations", Arch. Ration. Mech. Anal. {\bf 192} (2009), 165--186.

\bibitem{Const5} A. Constantin and L. Molinet, ``Orbital stability of solitary waves for a shallow water equation", Physica D {\bf 157} (2001), 75--89.

\bibitem{Cons1} A. Constantin and W. A. Strauss, ``Stability of peakons", Comm. Pure Appl. Math. {\bf 53} (2000), 603--610.

\bibitem{CS-02} A. Constantin and W.A. Strauss, ``Stability of the Camassa--Holm solitons”, J. Nonlinear Sci. {\bf 12} (2002), 415--422.

\bibitem{dhh} A.~Degasperis, D. D.~Holm and A. N. W.~Hone, 
``A new integrable equation with peakon solutions", 
Theor. and Math. Phys. {\bf 133} (2002) 1461--1472.

\bibitem{dp} A. Degasperis and  M. Procesi, ``Symmetry and Perturbation Theory", in {\em Asymptotic Integrability} (A. Degasperis and G. Gaeta, editors) 
(World Scientific Publishing, Singapore, 1999), 23--37.

\bibitem{Dullin} H. R. Dullin, G. A. Gottwald, and D. D. Holm, ``An integrable shallow water equation with linear and nonlinear
dispersion", Phys. Rev. Lett. {\bf 87} (2001) 194501.

\bibitem{DS} N. Dunford, J. T. Schwartz, {\em Linear Operators. Part II: Spectral Theory} (John Wiley \& Sons, New York, 1963)

\bibitem{Fokas} B. Fuchssteiner and A. S. Fokas, ``Symplectic structures, their Backlund transformations and hereditary symmetries", Physica D {\bf 4} (1981) 47--66.

\bibitem{GP1} A. Geyer and D. E. Pelinovsky, ``Linear instability and uniqueness of the peaked periodic wave
in the reduced Ostrovsky equation",  SIAM J. Math. Anal. {\bf 51} (2019), 1188--1208.

\bibitem{GP2} 
A.~Geyer and D. E. ~Pelinovsky, %
Proceedings of the American Mathematical Society {\bf 148} (2020)  5109--5125.


\bibitem{Molinet} Z. Guo, X. Liu, L. Molinet, and Z. Yin, ``Norm inflation of
the Camassa-Holm and relation equations in the critical space", J. Diff. Eqs. {\bf 266} (2019), 1698--1707.



\bibitem{Him} A. Himonas, K. Grayshan, and C. Holliman, ``Ill-posedness for the b-family of equations", J. Nonlin. Sci. {\bf 26} (2016), 1175--1190.


\bibitem{Holm1} D. D. Holm and M. F. Staley, ``Nonintegrability of a fifth-order equation with integrable two-body dynamics", Phys. Lett. A {\bf 308} (2003) 437--444.

\bibitem{Holm2} D. D. Holm and M. F. Staley, ``Wave structure and nonlinear balances in a family of evolutionary PDEs", SIAM J. Appl. Dyn. Syst. {\bf 2} (2003) 323--380.

\bibitem{Hone14} A. N. W.~Hone and S.~Lafortune, ``Stability of stationary solutions for nonintegrable peakon equations'',
Physica D {\bf 269} (2014), 28--36.



\bibitem{Johnson} R. S. Johnson, ``Camassa–Holm, Korteweg–de Vries and related models for water waves", J. Fluid Mech. {\bf 455} (2002), 63--82.

\bibitem{Liu-21} J. Li, Y. Liu, and Q. Wu, ``Spectral stability of smooth solitary waves for the Degasperis--Procesi equation", 
J. Math. Pures Appl. {\bf 142} (2020) 298--314.

\bibitem{LinLiu} Z. Lin and Y. Liu, ``Stability of peakons for the Degasperis--Procesi equation",
Comm. Pure Appl. Math. {\bf 62} (2009), 125--146.


\bibitem{Linares} F. Linares, G. Ponce, and Th. C. Sideris, ``Properties of solutions to the Camassa--Holm
equation on the line in a class containing the peakons", Advanced Studies in Pure Mathematics {\bf 81} (2019), 196--245.

\bibitem{MP-2021} A. Madiyeva and D. E. Pelinovsky, `` Growth of perturbations to the peaked periodic waves in the Camassa-Holm equation", SIAM J. Math. Anal. (2021), in print.


\bibitem{Natali} F. Natali and D. E. Pelinovsky, ``Instability of $H^1$-stable peakons in the Camassa-Holm equation", J. Diff. Eqs. {\bf 268} (2020), 7342--7363.

\bibitem{Renardy} 
M.~Renardy and R.~C.~Rogers, %
{\sl{An Introduction to Partial Differential Equations}}, 
Texts in Applied Mathematics, Springer-Verlag, 2nd edition (2004).

\bibitem{Teschl} G. Teschl, 
{\em Ordinary Differential Equations and Dynamical Systems},
Graduate Studies in Mathematics {\bf 140} 
(AMS, Providence RI, 2012).
\end{thebibliography}

\end{document}